\newtheorem{theorem}{Theorem}[section]
\newtheorem{lemma}[theorem]{Lemma}
\newtheorem{proposition}[theorem]{Proposition}
\newcommand{\Rn}{\mathbb{R}^n}
\newcommand{\D}{\mathrm{d}}
\newcommand{\Lc}{\mathcal{L}}
\newcommand{\Dc}{\mathcal{D}}
\def\tre{\textcolor{red}}
\newcommand{\Sc}{\mathcal{S}}
\newcommand{\lv}{\lVert}
\newcommand{\rv}{\rVert}
\newcommand{\I}{\mathrm{i}}
\newcommand{\err}{\mathbf{r}}
\renewcommand{\O}{\Omega}
\newcommand{\PD}{\partial}
\newcommand{\abs}[1]{|#1|}
\title{Linearized partial data  Calder{\'o}n problem for Biharmonic operators}
\author{Divyansh Agrawal}
\address{Centre for Applicable Mathematics, Tata Institute of Fundamental Research, India.}
\email{agrawald@tifrbng.res.in}
\author{Ravi Shankar Jaiswal}
\address{Centre for Applicable Mathematics, Tata Institute of Fundamental Research, India.}
\email{ravi@tifrbng.res.in}
\author{Suman Kumar Sahoo}
\address{Department of Mathematics, ETH Z\"urich, Z\"urich, Switzerland}
\email{susahoo@ethz.ch}
\begin{document}
\begin{abstract}
We consider a linearized partial data Calder{\'o}n problem for biharmonic operators extending the analogous result for harmonic operators \cite{linearized_partial_data}. We construct special solutions and utilize Segal-Bargmann transform to recover  lower order perturbations.
%The product of two harmonic functions vanishing on an arbitrary closed subset of the boundary is dense in the space of integrable functions \cite{linearized_partial_data}. This article extends this result for biharmonic functions.
%We consider a linearized inverse problem for biharmonic operators with partial boundary measurements. We utilize Fourier-Bros-Iagolnitzer (FBI) transform to recover several lower-order perturbations from partial data.
\end{abstract}
\subjclass[2010]{Primary 35R30, 31B20, 31B30, 35J40}
	\subjclass[2020]{Primary 35R30, 31B20, 31B30, 35J40}
	\keywords{Calder\'{o}n problem, biharmonic operator, Anisotropic perturbation, Segal-Bargmann transform}

\maketitle
\section{Introduction and main result}

The Calder\'on problem, introduced by Calder\'on in $1980$ \cite{Calderon1980}, aims to recover the electrical conductivity $\gamma$ of a medium based on the Dirichlet to Neumann map (DN map) denoted as $ \Lambda_{\gamma}:=\gamma \PD_{\nu} u|_{\PD\Omega}$. Here, $u$ represents the solution to the conductivity equation $ \nabla \cdot(\gamma\nabla u)=0$ with specified Dirichlet boundary conditions. As the mapping from $\gamma$ to $\Lambda_{\gamma}$ is nonlinear, it is valuable to investigate its linearization, known as the linearized Calder\'on problem. In \cite{Calderon1980}, he solved the linearized problem by proving the following result that if $\gamma$ is a bounded function in $\Omega$ and satisfies the equation \begin{align*}
    \int_{\Omega} \gamma \nabla u\cdot \nabla v=0 \quad \mbox{for all harmonic functions $u$ and $v$ in $ \Omega$, then $ \gamma=0$ in $\Omega$.}
\end{align*}

Sylvester and Uhlmann solved the  Calder\'on problem for $C^2(\Omega)$ conductivities in \cite{SYL}. Subsequently, various authors have investigated inverse problems for different types of partial differential equations. For more comprehensive results on the subject, we refer to the surveys \cites{Uhlmann_survey,Uhl_eip_survey}.

A closely related inverse problem is for the Schr\"odinger equation $ (-\Delta+q)u=0$, which can be derived from the conductivity equation for $C^2$ conductivities \cite{SYL}. For this reason the linearized problem for the Schr\"odinger is also an interesting question and 
 it can be formulated in the following way: Let $q$ be a bounded function in $\Omega$ and satisfies 
\begin{align*}
     \int_{\Omega} quv=0 \quad  \mbox{for all harmonic functions $u$ and $v$ in $ \Omega$, then  $ q=0$ in $ \Omega$.}
\end{align*}

Another closely related  problem involves recovering the conductivity $ \gamma$ (or potential) from partial measurements. In  \cite{Kenig_annals_2007}, the authors proved the unique determination of $q$ when the Dirichlet and Neumann measurements are given on two complementary open subsets of the boundary (roughly speaking) in dimensions $n\ge 3$. However, the unique recovery of $q$, when the Dirichlet and Neumann data are prescribed on the same part of the boundary, remains an open question in dimensions $n\ge 3$.  In two dimensions, this was solved by Imanuvilov, Uhlmann, and Yamamoto \cite{Imanuvilov_JAMS_partial}. Several partial answers are known  either by dropping the support condition on the Dirichlet data or by assuming additional symmetry on the domain $ \Omega$; see \cites{Uhlman_Greanleaf_twoplane,BUK,Kenig_annals_2007,Isakov_partial,K_S,Kenig_Salo_Survey}. Nevertheless, for an arbitrary domain $\Omega$, a linearized version of this problem was addressed  in \cite{linearized_partial_data}. For a comprehensive overview of the Calder\'on problem with partial data, we recommend the survey article \cite{Kenig_Salo_Survey}.

%Finally we refer the survey \cite{Kenig_Salo_Survey} on the Calder\'on problem with partial data.

This article focuses on a linearized Calder\'on problem for biharmonic operators, inspired by \cite{linearized_partial_data}. %In this article, we consider a linearized  Calder\'on problem for biharmonic operators in the spirit of \cite{linearized_partial_data}. 
Let  $\Omega\subset \mathbb{R}^n\, (n\ge 2)$  be a bounded domain with smooth boundary $\PD\O$. Let $ \Sigma\subseteq
\PD\O $ be a non-empty open subset of $ \PD\O$.  We consider the following  boundary value problem for biharmonic operator $\Lc$ with lower order anisotropic perturbations  up to order $3$: 
\begin{equation}\label{operator}
\begin{aligned}
\begin{cases}
    	\Lc(x,D)&=
		(-\Delta)^2 + Q(x,D)\quad  \mbox{in} \quad \Omega
	%\end{equation}
% 	where 
% 	\begin{equation} \label{operator2}
	   \\
	    (u,\PD_{\nu} u)&= (f_1,f_2) \quad \qquad \hspace{9mm} \mbox{on} \quad \PD\Omega
\end{cases}
\end{aligned}
\end{equation}
where $ Q(x,D):= \sum_{l=0}^{3} a^{l}_{i_1\cdots i_{l}}(x) \, D^{i_1\cdots i_l}$	is a differential operator of order $3$ with $1\le i_1,\cdots,i_l\le n$ and $a^{l}$ is a smooth symmetric  tensor field of order $l$ in $\overline{\Omega}$. Here $ f_j$  are suitable functions on the boundary with support of $f_j\subset\Sigma$, for $j=1,2$.  Einstein summation convention is assumed for repeated indices throughout the article. 

Suppose $0$ is not an eigenvalue of \eqref{operator}, then the boundary measurements associated to \eqref{operator} can be encoded in terms of partial DN map as follows:
% The inverse problem we are interested in is to recover the lower order coefficients $ a^j$ for $j=0,1\cdots ,3$ from the knowledge of Navier to Neumann map
\begin{align}\label{dn_map}
    \Lambda_{Q} (f_1,f_2):= (\PD^2_{\nu} u|_{\Sigma}, \PD^3_{\nu} u|_{\Sigma}).
\end{align}
Alternatively, one can also prescribe the partial boundary measurements in terms of the Cauchy data set as follows:
\begin{align}
\mathcal{C}_{Q,\Sigma}:= \{(u,\PD_{\nu}u, \PD^2_{\nu}u,\PD^3_{\nu} u)|_{\Sigma}: u\in H^4(\Omega), \Lc (x,D)u=0 \: \mbox{in}\: \Omega\}.
\end{align}
The inverse problem we are interested in is to show that the Fr\'echet derivative of $ \Lambda_{Q}$ (evaluated at $Q=0$) is injective. This result establishes local uniqueness for the linearized Calder\'on problem of biharmonic operators. To state our main result, let us define the set
\begin{align}\label{def_mathcal_E}
   \mathcal{E} := \{u \in C^\infty(\overline{\O}): \Delta^2 u = 0 \, \text{in} \, \O, 
 \, \mbox{and}\, (u, \partial_\nu u)= 0 \, \text{on} \, \Gamma\}, \quad \mbox{where} \quad \Gamma:= \partial \O \backslash \Sigma. 
\end{align}
Our main result is
\begin{theorem}\label{th:main_result}
    Let $a^2,a^0\in C^{\infty}(\overline{\Omega})$ and $a^1\in C^{\infty}(\overline{\Omega}; \Rn)$. Suppose %we have  the following integral identity 
\begin{align}\label{integral_identity}
\int\limits_{\Omega}\left(a^2\Delta u  + a^1\cdot \nabla u  + a^0 u\right) v \, \D x = 0 \quad \mbox{ holds for all} \quad u,v \in \mathcal{E}.
\end{align}
%holds for all biharmonic functions $u$ and $v$  in $\Omega$ along with their Dirichlet boundary conditions are supported in %$u,\PD_{\nu}u$  $ \Sigma$. 
    Then  $ a^j=0$ in $\Omega$, for $ j=0,1,2$.
\end{theorem}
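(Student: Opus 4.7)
The plan is to combine a Runge-type approximation for biharmonic functions with families of concentrating biharmonic quasimodes, and to invert a Segal--Bargmann-type transform in order to recover $a^{0}, a^{1}, a^{2}$ pointwise from \eqref{integral_identity}. As a preliminary, I would establish that every smooth biharmonic function on a neighbourhood of $\overline{\Omega}$ is a limit in $H^{2}(\Omega)$ of elements of $\mathcal{E}$; this is a Hahn--Banach density argument whose main analytic input is unique continuation for $\Delta^{2}$ across $\Sigma$, and it allows \eqref{integral_identity} to be used with $u,v$ drawn from the much larger class of smooth biharmonic functions on a neighbourhood of $\overline{\Omega}$, bypassing the support condition on $\Gamma$.

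Next, fix an arbitrary interior point $x_{0} \in \Omega$ and a complex vector $\zeta \in \Cb^{n}$ with $\zeta \cdot \zeta = 0$. For small $h>0$ I would build two kinds of biharmonic test functions from exponentials of the form $e^{\zeta \cdot (x-x_{0})/h}$: the exact harmonic exponential (for which $\Delta u \equiv 0$), and a biharmonic modification of the form $P(x)\, e^{\zeta \cdot (x-x_{0})/h}$ with $P$ a polynomial (for example $P(x) = |x-x_{0}|^{2}$), chosen so that $\Delta u$ is nonzero at the required order in $h$. Multiplying by a Gaussian weight $e^{-|x-x_{0}|^{2}/(2h)}$ and adding biharmonic corrections of lower order in $h$ produces quasimodes $u_{h}, v_{h}$ concentrating at $x_{0}$ as $h \to 0^{+}$; their products $u_{h} v_{h}$ are Gaussians in $x-x_{0}$ modulated by phases depending on $\zeta$, so that \eqref{integral_identity} reduces in the limit to Segal--Bargmann-type transforms of $a^{0}, a^{1}, a^{2}$ evaluated at $(x_{0}, \zeta)$.

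To conclude, I would disentangle the three coefficients by their asymptotic orders in $h$. Choosing $u_{h}$ of the $P\cdot \mathrm{exp}$-type and $v_{h}$ harmonic isolates a leading term involving only $a^{2}$; varying $\zeta$ and invoking injectivity of the Segal--Bargmann transform gives $a^{2}(x_{0})=0$. With $a^{2}$ eliminated, the identity reduces to $\int_{\Omega} (a^{1}\cdot \nabla u + a^{0} u)\, v\, dx = 0$ on $\mathcal{E}$; choosing both $u_{h}, v_{h}$ harmonic now isolates $a^{1}$ at order $h^{-1}$ and $a^{0}$ at order $h^{0}$, so the same Segal--Bargmann inversion yields $a^{1}(x_{0}) = a^{0}(x_{0}) = 0$. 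Since $x_{0}\in\Omega$ was arbitrary this proves the theorem.

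The main obstacle I expect is the clean separation of the three coefficients within a single bilinear identity: all three contribute to the same integral but at different asymptotic orders in $h$, and the specific $\zeta$, the polynomial $P$, and the biharmonic corrections must be arranged so that exactly one coefficient survives at leading order in each step. A secondary technical point is that the Runge approximation must hold in $H^{2}(\Omega)$ (since $\Delta u$ appears in the integrand), which is subtle because $\mathcal{E}$ is defined by a first-order Cauchy condition on $\Gamma$ rather than by the natural variational boundary data for $\Delta^{2}$.
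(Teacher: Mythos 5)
Your opening Runge step is where the argument breaks. You claim that every smooth biharmonic function on a neighbourhood of $\overline{\Omega}$ is an $H^{2}(\Omega)$-limit of elements of $\mathcal{E}$, so that \eqref{integral_identity} could be tested against \emph{all} biharmonic functions. But this is false: the trace map $u \mapsto (u|_\Gamma, \partial_\nu u|_\Gamma)$ is continuous from $H^{2}(\Omega)$ to $H^{3/2}(\Gamma)\times H^{1/2}(\Gamma)$, so the $H^{2}$-closure of $\mathcal{E}$ still satisfies $(u,\partial_\nu u)|_\Gamma = 0$ and therefore cannot contain any biharmonic function with nontrivial Cauchy data on $\Gamma$. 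Unique continuation is not the right tool here; it tells you a biharmonic function vanishing to high order on an open boundary piece is zero, not that vanishing Cauchy data on a \emph{fixed} $\Gamma$ imposes no density restriction. In effect you would be reducing the partial-data problem to the full-data problem, which is precisely what cannot be done and is the source of all the difficulty.

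The paper's route is genuinely different and works around this. It builds CGO-type solutions $u = e^{-\mathrm i x\cdot\xi/h}a(x) + r$ that vanish on $\Gamma$ by construction, with the correction $r$ solving a Dirichlet problem whose boundary data is localized near $\Gamma$, so that $\|r\|_{H^2}$ carries the decisive weight $e^{H_K(\operatorname{Im}\xi)/h}$ (exponentially small when $\operatorname{Im}\xi$ points suitably). The Segal--Bargmann transform is then applied to the \emph{coefficients} $a^j$, not to quasimodes, and combined with the DKSU decomposition and a holomorphic extension lemma to get only a \emph{local} vanishing near one point of $\Sigma$ (Proposition~\ref{lm: local uniqueness}). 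A second stage is essential: the local vanishing is propagated across $\Omega$ using Green kernels, a Runge-type density result for biharmonic functions on nested domains (Lemma~\ref{lm:density}), and a connectedness argument. Your ``Gaussian quasimodes concentrating at an interior $x_0$'' step also has no mechanism for making the solutions vanish on $\Gamma$ with uniform control, which is what the construction in Lemma~\ref{lm: special solutions} together with Lemma~\ref{lem:decay} provides. The idea of separating $a^2$, $a^1$, $a^0$ by choosing exponential solutions with different polynomial amplitudes is close in spirit to the algebra in Step~1--3 of Proposition~\ref{lm: local uniqueness}, but it only enters after the approximation issue has been handled correctly.
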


Previous research has focused on inverse problems for polyharmonic operators (which are perturbations of $ (-\Delta)^m$ when $m\ge 2$) of order 2m. Krupchyk, Lassas, and Uhlmann initiated the study of inverse problems for polyharmonic operators in their works \cites{KRU1,KRU2}. More recently, several authors have investigated inverse problems for higher-order elliptic operators, such as biharmonic and polyharmonic operators; see \cites{Ghosh-Krishnan,BG_19,BG_biharmonic_second_order}. These works have successfully solved inverse problems for polyharmonic operators which  recovered functions, vector fields, or two tensor fields. %from lower-order perturbations.
In a  recent  work \cite{polyharmonic_mrt_application} the authors solved an inverse problem for polyharmonic operators of order $2m$ with lower-order tensorial perturbations up to order $m$, utilizing momentum ray transforms. Furthermore, a linearized Calder\'on problem for polyharmonic operators of order $2m$ with lower order perturbation up-to $2m-1$   was considered in  \cite{SS_linearized}, also employing momentum ray transform techniques. However, when dealing with partial data, the lack of sufficient information to extract the momentum ray transform of unknown tensor fields poses a technical challenge. In this work, instead of relying on the momentum ray transform, the authors employ techniques of the Segal-Bargmann transform 
from the work \cite{linearized_partial_data}. It appears that addressing linearized partial data inverse problems for polyharmonic operators with  lower order anisotropic perturbations may require new techniques or tools, which the authors aim to explore in future research.

To the best of the authors' knowledge, this work is the first to consider the linearized problem for higher-order operators with partial data. Recent research has shown that solutions to the linearized problem can be used to solve inverse problems for nonlinear partial differential equations (PDEs) using higher-order linearization techniques, as demonstrated in previous works. We refer  \cites{LLLS_JMPA,Fractional_power_LLST,Krupchyk_remark,Krupchyk_isotropic_quasilinear,Kian_quasilinear_partial} for nonlinear elliptic PDEs, \cites{KLU_invention,KLUO_duke,Hintz_cpde} for nonlinear hyperbolic PDEs to name a few. The authors hope that their work will pave the way for similar results in the future for higher-order elliptic operators.

% \textcolor{blue}{To the best of authors' knowledge, this is the first work to consider the linearized problem for higher order operators with partial data. Recently it was observed  that solutions of linearized problem can be utilized in solving inverse problem for nonlinear elliptic partial differential equations using higher order linearization techniques; see for instance \cite{LLLS_JMPA,Fractional_power_LLST,Krupchyk_remark,Krupchyk_isotropic_quasilinear} . %The work in \cite{linearized_partial_data} has had numerous applications (!!!!add citations here!!!!). 
% The authors hope that the present work will serve as a step in the direction of similar results for higher order operators.}

%In the seminal work \cite{Calderon1980} Calder\'on  proposed an inverse problem for the conductivity equation $ \nabla \cdot(\gamma \nabla u)=0$ in $\Omega$, where $ \gamma$ denotes the electrical conductivity of a medium, and proved that 
%\subsection{Organization}
The rest of the article is structured as follows.  In Section \ref{sec:preliminaries}, we start by presenting some preliminary results that are essential for the proof of our main result. The section is divided into four subsections, each dealing with a specific topic. In the first subsection, the integral identity is transformed to an equivalent integral identity through a conformal change of variables. The second subsection discusses the construction of special solutions of biharmonic equations, where their Dirichlet data vanishes in a part of the boundary (as shown in Lemma \ref{lm: special solutions}). The third subsection provides a brief introduction to the Segal-Bargmann transform and the fourth subsection presents a local uniqueness result (Proposition \ref{lm: local uniqueness}).
%which contains following in order:
% \begin{enumerate}
%     \item Construction of special solutions of biharmonic equations  with their Dirichlet data vanishing in some part of the boundary (see Lemma \ref{lm: special solutions}). 
%     \item Brief introduction of FBI transform.
%     \item A local uniqueness result (see Lemma \ref{lm: local uniqueness}).
% \end{enumerate}
Finally, Section \ref{sec: proof of main result} focuses  on demonstrating the main result of the article which is Theorem \ref{th:main_result}. In appendix \ref{appen_linearization}  we linearize Dirichlet to Neumann map, and in appendix \ref{appendix_decay} we present a decay estimate of special solutions used in Lemma \ref{lm: special solutions}.

%We  in subsection  

%we derive the linearized inverse problem and state our main results. Section \ref{sec:uniqueness_up_to_2m-2} is devoted to the unique recovery of tensor fields up to order $2m-2$. Then  Section \ref{sec:uniqueness_up_to_2m-1} deals with the recovery of coefficients up to order $2m-1$ under certain assumptions. Section \ref{Gauge transformation} deals with gauge transformations and recovery of coefficients up to natural obstructions; see Theorem \ref{main_theorem_3}. Finally, in Section \ref{sec:mrt} we describe the kernel of MRT which is the key tool for proving  our main results. The required Helmholtz type decomposition result for tensor fields is then proved in  Section \ref{sec:decomposition}.  In Appendix \ref{sec:preliminary} we review some known results and give the construction of special solutions (known as CGO solutions) of $(-\Delta)^m u=0$. The Navier to Neumann map is then linearized in Appendix \ref{linearization} by computing the Fr\'echet derivative.

\section{Preliminaries}\label{sec:preliminaries}
\subsection{Change of coordinates}Let us first consider a change of coordinates for ease of calculations. Fix a point $x_0 \in \Sigma$ and choose an exterior ball to $\O$ at $x_0$, say $B(a,r)$ i.e. $\overline{\O} \cap \overline{B(a, r)} = \{x_0\}$. Consider the conformal change of variables
\[
\psi: x \mapsto \frac{x-a}{|x-a|^2}r^2 + a
\]
which fixes the point $x_0$  and maps $\O$ to the interior of the ball $B(a,r)$.  Next, we observe that a function $u$ is biharmonic if and only if $u^* = r^{n - 4}|x - a|^{4 - n} u \circ \psi$ is biharmonic. This is the analog of the Kelvin transform for the bilaplacian; see \cite{kelvin-biharmonic}.

A function $u$ and normal derivative of $u$
are zero on $\Gamma$ if and only if $u^*$ and normal derivative of $u^{*}$
are zero on $\psi(\Gamma)$.
Consequently, the integral identity becomes 
\[
\int\limits_{\psi (\O)} \left(\tilde{a}^2\Delta u  + \tilde{a}^1\cdot \nabla u  + \tilde{a}^0 u\right) v \, \D x = 0,
\]
for all smooth biharmonic functions $u,v$ in $\psi (\O)$ such that $(u, \frac{\partial u}{\partial \nu})|_{\psi(\Gamma)} = 0 = (v, \frac{\partial v}{\partial \nu})|_{\psi(\Gamma)}$.  Moreover, $a^0, a^1$ and $a^2$ are zero near $x_0$ if and only if $\tilde{a}^0, \tilde{a}^1$ and $\tilde{a}^2$ are zero near $x_0$. After a rotation and translation we can bring $x_0$ to the origin. Thus our set-up will be as follows: $\O \subset B(-e_1, 1)$ and $\Gamma \subset \{x_1 < -2c\}$ for some $c>0$, after a suitable translation and rotation. We want to show the following:  
\[
\int\limits_{\Omega} [a^2 \Delta u + a^1_i \partial_i u + a^0 u ] v \, \mathrm{d} x = 0 \quad \mbox{for all} \quad u,v \in \mathcal{E} \implies a^2= a^1=a^0 = 0  \quad \mbox{in} \quad \Omega.
\]
%for all $u,v$ biharmonic in $\Omega$ such that $(u, \frac{\partial u}{\partial \nu})|_{\Gamma} = 0 = (v, \frac{\partial v}{\partial \nu})|_{\Gamma}$ implies that $(a^2, a^1,a^0) = 0$ .
\subsection{Construction of special solutions} In this section we carry out the construction of special solutions of biharmonic operators. To construct the solutions which vanish on part of the boundary, we use the method analogous to \cite{linearized_partial_data}. To this end, let $\chi \in C_c^\infty(\mathbb{R}^n)$ be a cut-off function which is 1 in a neighbourhood of $\Gamma$, and define $
H_K(y):= \sup\limits_{x \in  K} x \cdot y$, where $K = \mathrm{supp} \, \chi \cap \partial \Omega$ which can be taken to be subset of $\{x_1 < -c\}$.

\begin{lemma}\label{lm: special solutions}
   Suppose $\xi \in \mathbb{C}^n $ such that $\xi \cdot \xi = 0$ and  $a$ be any smooth function satisfying 
    \begin{align*}
        \Delta a=constant \quad \mbox{and} \quad \sum_{i,j}\nabla_{ij}^2 a\cdot \xi_i\xi_j= 0.
    \end{align*}
    There exists $u \in \mathcal{E}$ of the form $  u= e^{-\I x\cdot \xi/h} a(x) +r(x,h),$
    % \begin{align*}
    %     u= e^{-\I x\cdot \xi/h} a(x) +r(x,h),
    % \end{align*}
    where $r$ satisfies 
    \begin{align*}
     \|r\|_{H^2(\Omega)} \leq C (1 + \frac{|\xi|^2}{h^2} + \frac{|\xi|^4}{h^4})^{1/2} e^{\frac{1}{h} H_K (\mathrm{Im} \ \xi)},
     \end{align*}
     where  $C$ is a constant independent of  $\xi$, and $h$. 
\end{lemma}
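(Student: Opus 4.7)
The plan is the standard cutoff-plus-correction for complex geometric optics, adapted to partial-data constraints, with an important simplification: the three hypotheses on $\xi$ and $a$ are precisely what make the leading ansatz $e^{-\I x\cdot\xi/h}a(x)$ already biharmonic in $\mathbb{R}^n$. As a first step, I would verify this directly. Writing $\phi := -\I x\cdot\xi/h$, one has $\Delta\phi = 0$ and $|\nabla\phi|^2 = -(\xi\cdot\xi)/h^2 = 0$, so $\Delta(e^\phi a) = e^\phi b$ with $b := \Delta a - (2\I/h)\,\xi\cdot\nabla a$. Since $\Delta a$ is constant, $\Delta b = 0$ and $\partial_j b = -(2\I/h)\xi_k\partial_{jk}a$, giving $\nabla b\cdot\nabla\phi = -(2/h^2)\sum_{j,k}\xi_j\xi_k\partial_{jk}a = 0$ by the third hypothesis. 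Applying $\Delta$ once more then yields $\Delta^2(e^\phi a) = 0$.

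Next, to enforce vanishing Cauchy data on $\Gamma$, I would use the cutoff $\chi$ (equal to $1$ near $\Gamma$, supported in a thin neighborhood of $K$) and solve the biharmonic Dirichlet problem
\[
\Delta^2 \tilde v = \Delta^2(\chi e^\phi a)\ \text{in}\ \Omega, \qquad \tilde v = \partial_\nu \tilde v = 0\ \text{on}\ \partial\Omega,
\]
for $\tilde v \in H_0^2(\Omega)$, using Lax--Milgram on the form $(\vp,\psi)\mapsto\int_\Omega \Delta\vp\,\Delta\psi$ (coercive on $H_0^2$ by Poincar\'e). I would then define $u := (1-\chi)e^\phi a + \tilde v$ and $r := u - e^\phi a = -\chi e^\phi a + \tilde v$. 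By Step 1, $\Delta^2 u = -\Delta^2(\chi e^\phi a) + \Delta^2(\chi e^\phi a) = 0$; since $\chi \equiv 1$ in a neighborhood of $\Gamma$, both $(1-\chi)$ and its normal derivative vanish on $\Gamma$, and $\tilde v$ has zero Cauchy data on all of $\partial\Omega$, so $u \in \mathcal{E}$.

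For the norm estimate, on $\mathrm{supp}\,\chi$ one has $|e^\phi| = e^{x\cdot\mathrm{Im}\,\xi/h} \le e^{H_K(\mathrm{Im}\,\xi)/h}$ (taking the support of $\chi$ sufficiently thin around $K$ so that any boundary-layer discrepancy is absorbed into the constant). Each derivative hitting $e^\phi$ costs a factor of $|\xi|/h$, yielding $\|\chi e^\phi a\|_{H^2(\Omega)} \le C(1 + |\xi|^2/h^2)\,e^{H_K(\mathrm{Im}\,\xi)/h}$. The Lax--Milgram bound $\|\tilde v\|_{H^2} \le C\|\Delta^2(\chi e^\phi a)\|_{H^{-2}}$ combined with the elementary inequality $\|\Delta^2 w\|_{H^{-2}(\Omega)}\le \|w\|_{H^2(\Omega)}$ (which follows from $\int_\Omega \Delta^2 w\cdot\vp = \int_\Omega \Delta w\cdot\Delta\vp$ for $\vp\in H_0^2$, obtained by integrating by parts twice) gives the same bound for $\|\tilde v\|_{H^2}$. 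The triangle inequality, together with $1+t^2 \le \sqrt{2}\sqrt{1+t^2+t^4}$ applied at $t=|\xi|/h$, then yields the claimed estimate.

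The genuinely delicate step, I expect, is the first one: the three conditions on $\xi$ and $a$ are precisely tailored to annihilate all the error terms in $\Delta^2(e^\phi a)$, and it is this algebraic alignment that lets us bypass the usual interior transport/Carleman corrections for CGOs and reduce the problem to a single Dirichlet solve. Everything afterwards is a routine cutoff-plus-Lax--Milgram argument, with the only technical subtlety being the control of the support of $\chi$ when converting $x\cdot\mathrm{Im}\,\xi$ into the $H_K$ bound.
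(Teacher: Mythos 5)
Your construction is the same as the paper's, and the explicit verification that $e^{\phi}a$ (with $\phi=-\I x\cdot\xi/h$) is biharmonic under the three hypotheses is a useful addition that the paper only asserts. Indeed your $r=-\chi e^\phi a+\tilde v$ solves exactly the same homogeneous biharmonic Dirichlet problem
\[
\Delta^2 r=0 \ \text{in}\ \Omega,\qquad (r,\partial_\nu r)=\bigl(-\chi e^\phi a,\ -\partial_\nu(\chi e^\phi a)\bigr)\ \text{on}\ \partial\Omega
\]
that the paper writes down directly for its $r$; you have just split off the particular solution $-\chi e^\phi a$ and a correction $\tilde v\in H_0^2(\Omega)$ to turn it into an inhomogeneous problem with zero Cauchy data. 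So the two proofs build the same object; they differ in how they \emph{estimate} it.

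The gap is precisely in that last step, and it is not cosmetic. The paper estimates $\|r\|_{H^2(\Omega)}$ by elliptic regularity for the Dirichlet biharmonic problem through the boundary trace norms $\|\chi e^\phi a\|_{H^{3/2}(\partial\Omega)}$ and $\|\partial_\nu(\chi e^\phi a)\|_{H^{1/2}(\partial\Omega)}$; these traces are supported in $K=\operatorname{supp}\chi\cap\partial\Omega$, which is why the exponential weight $e^{H_K(\operatorname{Im}\xi)/h}$ appears. Your Lax--Milgram route instead gives $\|\tilde v\|_{H^2}\le C\|\Delta^2(\chi e^\phi a)\|_{H^{-2}(\Omega)}\le C\|\chi e^\phi a\|_{H^2(\Omega)}$, an \emph{interior} $H^2$ norm, which sees $|e^\phi|=e^{x\cdot\operatorname{Im}\xi/h}$ over all of $K':=\operatorname{supp}\chi\cap\overline\Omega$, not just over $K$. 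Since $K\subsetneq K'$ one has $H_K\le H_{K'}$ with strict inequality for generic directions $\operatorname{Im}\xi$, so you actually prove $\|r\|_{H^2}\le C(\cdots)e^{H_{K'}(\operatorname{Im}\xi)/h}$. The parenthetical remark that the discrepancy is ``absorbed into the constant'' by taking the support of $\chi$ thin is false: the ratio $e^{(H_{K'}(\operatorname{Im}\xi)-H_K(\operatorname{Im}\xi))/h}$ grows without bound as $h\to 0$ (and as $|\xi|\to\infty$) for any \emph{fixed} $\chi$, unless $K'\subset\operatorname{conv}(K)$, which the geometry does not guarantee. To obtain the stated $H_K$ bound you must pass through the boundary traces, i.e., use an estimate of the type $\|r\|_{H^2(\Omega)}\le C\bigl(\|r\|_{H^{3/2}(\partial\Omega)}+\|\partial_\nu r\|_{H^{1/2}(\partial\Omega)}\bigr)$ for the homogeneous biharmonic Dirichlet problem (this is the paper's Lemma~\ref{lem:decay} together with \cite{Polyharmonic_Book}*{Theorem 2.20}), rather than the $H^{-2}\to H^2$ bound for the inhomogeneous one.
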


\begin{comment}
\begin{lemma}
    There exists solutions of $\Delta^2 u=0$ in $\Omega$ and $(u,\PD_{\nu}u)=0$ on $\Gamma$ in the form 
    \begin{align*}
         u= e^{-\I x\cdot \xi/h} a(x) +r(x,h),\quad \mbox{for suitable  smooth function $a$ satisfying $ \Delta a=c,\, \sum_{i,j}\nabla_{ij}^2 a\cdot \xi_i\xi_j=0$}.
    \end{align*}
     %\end{equation}
    Where $\xi\in \{z \in \mathbb{C}^n: z \cdot z = 0\} $,  and $r$ is an error term satisfies the following boundary value problem
\begin{equation}\label{eq_Dirichlet}
    \begin{aligned}
        \Delta^2 r&=0   \quad\mbox{in}\quad \Omega\\
        (r, \PD_{\nu} r)&= \left (- e^{-\I x\cdot \xi/h} a(x)\chi, - \PD_{\nu} (e^{-\I x\cdot \xi/h} a(x)\chi)  \right) \quad \mbox{on } \quad \PD \Omega,
       % \PD_{\nu} r&= - \PD_{\nu} (e^{-\I x\cdot \xi/h} a(x)\chi) \quad \mbox{on }\quad \PD \Omega,
    \end{aligned}
    \end{equation}
   with the following estimate 
    \begin{align*}
     \|r\|_{H^2(\Omega)}  %(\|w_0\|_{H^{3/2}(\partial \Omega)} + \|\frac{\partial w_0}{\partial \nu}\|_{H^{1/2}(\partial \Omega)}) \\
 \leq C (1 + \frac{|\xi|^2}{h^2} + \frac{|\xi|^4}{h^4})^{1/2} e^{\frac{1}{h} H_K (\mathrm{Im} \ \xi)},\: \mbox{where  $C$ is a constant independent of $\xi$.}  
    \end{align*}
    \end{lemma}
\end{comment}

\begin{proof}
    Fix a smooth function $a$ satisfying the assumptions of the lemma. For such $a$ and $\xi$, the function $ae^{-\mathrm{i} x \cdot \xi / h}$ is biharmonic in $\Omega$. The term $r$ serves as the correction term which forces the solution to vanish on the required part of the boundary. Let us choose $r$ to be the solution of 
    \begin{equation}
        \begin{cases}
            \Delta^2 r &= 0 \quad \mbox{in} \quad  \Omega \\
            (r, \partial_\nu r) &= (-(ae^{-\mathrm{i} x \cdot \xi / h}) \chi, - \partial_\nu (ae^{-\mathrm{i} x \cdot \xi / h}) \chi) \quad \mbox{on} \quad \partial \Omega.
        \end{cases}
    \end{equation}
    Clearly, the function $u= e^{-\I x\cdot \xi/h} a(x) +r(x,h) \in \mathcal{E}$ and the bounds on $r$ are obtained from Lemma \ref{lem:decay}.% the well-posedness of the above Dirichlet problem ( see \cite{Polyharmonic_Book}*{Theorem 2.20} and Lemma \ref{lem:decay} for explicit calculation). 
\end{proof}

\begin{comment}
\begin{proof}
We wish to construct special solutions of $\Delta^2u=0$ in the form $ e^{-\I x\cdot \xi/h} a(x) +r(x,h)$. By the conditions on $a$, we have $ \Delta^2( e^{-\I x\cdot \xi/h} a(x))=0$ in $ \Omega$. Choosing $r$ to be a solution of \eqref{eq_Dirichlet} ensures that $u \in \mathcal{E}$.

% We wish to construct special solutions of $\Delta^2u=0$ in the form $ e^{-\I x\cdot \xi/h} a(x) +r(x,h)$. By assumption we have that $ \Delta^2( e^{-\I x\cdot \xi/h} a(x))=0$ in $ \Omega$. This implies $ \Delta^2 r=0$ in $\Omega$.
%We use the harmonic exponentials, $ e^{\frac{- \mathrm{i} x \cdot \xi}{h}}$, which are solutions of the harmonic (and hence biharmonic) equation, for $\xi \in \{z \in \mathbb{C}^n: z \cdot z = 0\}$. We also consider solutions of the biharmonic equation of the type $|x|^2 e^{\frac{- \mathrm{i} x \cdot \xi}{h}}$ and $ x_j e^{\frac{- \mathrm{i} x \cdot \xi}{h}}$ for such $\xi$ and each $1 \leq j \leq n$.

% Moreover, we also want $u=\PD_{\nu} u=0$ in $\Gamma$. To achieve this, we next consider $\chi \in C_c^\infty(\mathbb{R}^n)$ such that $\chi \equiv 1$ on a  neighbourhood  of $\Gamma$. This implies 
\begin{align*}
    (r, \PD_{\nu} r)&= \left (- e^{-\I x\cdot \xi/h} a(x)\chi, - \PD_{\nu} (e^{-\I x\cdot \xi/h} a(x)\chi)  \right) \quad \mbox{on } \quad \PD \Omega.
\end{align*}
We may also assume that support of $ \chi \subset \{x=(x_1,x_2,\ldots,x_n) \in \Rn: x_1 < -c \}$.
%Let $w_0(x, h, \xi) = - \chi(x) e^{\frac{- \mathrm{i} x \cdot \xi}{h}}; w_j(x, h, \xi) = - \chi(x)  x_j e^{\frac{- \mathrm{i} x \cdot \xi}{h}}$ and $w_\sharp = - \chi(x) |x|^2 e^{\frac{- \mathrm{i} x \cdot \xi}{h}}$
%Consider the following Dirichlet problems:
Since $r$ satisfies the Dirichlet boundary value problem \eqref{eq_Dirichlet}, we obtain the following $H^2$ bound for $r$ combining  \cite[Theorem 2.20]{Polyharmonic_Book}) and Lemma \ref{lem:decay}.
\begin{align*}
\|r\|_{H^2(\Omega)} & \leq C (\|e^{-\I x\cdot \xi/h} a(x)\chi\|_{H^{3/2}(\partial \Omega)} +  \| \PD_{\nu} (e^{-\I x\cdot \xi/h} a(x)\chi)\|_{H^{1/2}(\PD \Omega)} )  \\
& \leq C (1 + \frac{|\xi|^2}{h^2} + \frac{|\xi|^4}{h^4})^{1/2} e^{\frac{1}{h} H_K (\mathrm{Im} \ \xi)},
\end{align*}
where  $C$ is a generic constant whose value might change from line to line and  independent of $\xi$. %Here, $H_K$ for $K = \mathrm{supp} \, \chi \cap \partial \Omega$ is defined, as 
% $
% H_K(y):= \sup\limits_{x \in  K} (x \cdot y)$ for  $y  \in \mathbb{R}^n.$
%We will use solutions of these type for two different vectors, say $\xi$ and $\eta$, satisfying $\xi \cdot \xi = 0 = \eta \cdot \eta$. To avoid cluttering, we will use $\tilde{v}_0, \tilde{\err}^1_j, \tilde{v}_{\sharp} $ and $v_0, v_j, v_\sharp$, which have the same meaning as $\tilde{u}_0, \tilde{\err}_j, \tilde{u}_{\sharp} $ and $u_0, u_j, u_\sharp$ above, with $\xi$ replaced by $\eta$.\\
%\tre{SS: Check the estimate for error term.}
\end{proof}
\end{comment}

\subsection{Segal-Bargmann transform}\label{sec:Segal-Bargmann transform}
%\tre{SS: Add brief introduction about FBI transform. Why we need this transform. Formulate a result as a lemma which could be applicable directly for our purposes. }
The Segal-Bargmann transform  \cite{linearized_partial_data} of a function $f$ on $\mathbb{R}^n$ is defined for $z \in \mathbb{C}^n$ as
\[
Tf(z):= \int e^{-\frac{1}{2h} (z-y)^2} f(y) \,\mathrm{d}y.
\]
This is well-defined for $f \in L^\infty(\mathbb{R}^n)$ and one has
\begin{align*}
    |Tf(z)| \leq \int |e^{-\frac{1}{2h} (z-y)^2}| |f(y)| \, \mathrm{d}y 
     \leq \int e^{-\frac{1}{2h} (|\mathrm{Re}z - y|^2 - |\mathrm{Im} z|^2)} |f(y)|\, \mathrm{d}y  \leq e^{\frac{1}{2h} |\mathrm{Im} z|^2} \|f\|_\infty (2 \pi h)^{n/2}.
\end{align*}
The property most important to us is that when $z \in \mathbb{C}^n$ is restricted to $x \in \mathbb{R}^n$, we obtain 
$
\frac{1}{(2 \pi h)^{n/2}} T f (x) = f \ast G (x),$
which is convolution of $f$ with Gaussian $G(x)= e^{-\frac{|x|^2}{2h}}$ . %is  the standard Gaussian mollifier.
Therefore, for a bounded function $f$ with compact support, 
\[
\lim_{h \to 0} \frac{1}{(2 \pi h)^{n/2}} \, T f = f \quad \text{in} \,\, L^p(\mathbb{R}^n) \,\, \text{for all} \,\, 1 \leq p < \infty.
\]
Therefore, to prove that $f$ vanishes in a region, we will prove that the limit on the left vanishes in that region. To do this, we will utilize the exponential bounds of special solutions constructed in the previous section; see Lemma \ref{lm: special solutions}. 
Notice that 
\[
e^{-\frac{1}{2h} (z-y)^2} = e^{-\frac{z^2}{2h}} (2 \pi h)^{-n/2} \int\limits_{\mathbb{R}^n} e^{-\frac{t^2}{2h}} e^{-\frac{\mathrm{i}}{h} y \cdot (t + \mathrm{i} z)} \mathrm{d}t.
\]
Therefore interchanging the order of integration, we obtain 
% \begin{align*}
%     Tf(z) = \frac{1}{(2 \pi h)^{n/2}} \int\limits_{\mathbb{R}^n} \int\limits_{\mathbb{R}^n} e^{-\frac{z^2}{2h}} e^{-\frac{t^2}{2h}} e^{-\frac{\mathrm{i}}{h} y \cdot (t+ \mathrm{i} z)} f (y) \mathrm{d}t \, \mathrm{d}y
% \end{align*}
\begin{align*}
    |Tf(z)| &\leq \frac{1}{(2 \pi h)^{n/2}} \, e^{\frac{-1}{2h} (|\mathrm{Re} z|^2 - |\mathrm{Im}z|^2)} \int\limits_{\mathbb{R}^n} e^{-\frac{t^2}{2h}} \left |\int\limits_{\mathbb{R}^n} e^{-\frac{\mathrm{i}}{h} y \cdot (t + \mathrm{i} z)} f(y) \mathrm{d}y \right | \mathrm{d}t \\
    & \leq \frac{1}{(2 \pi h)^{n/2}} \, e^{\frac{-1}{2h} (|\mathrm{Re} z|^2 - |\mathrm{Im}z|^2)} \Big( \int\limits_{|t| < \epsilon a} + \int\limits_{|t| \geq \epsilon a} \Big ) \left ( e^{-\frac{t^2}{2h}} \left |\int\limits_{\mathbb{R}^n} e^{-\frac{\mathrm{i}}{h} y \cdot (t + \mathrm{i} z)} f(y) \mathrm{d}y \right | \right ).
\end{align*}
For functions $f$ supported in the bounded set $\Omega \subset \{ y_1 \leq 0\}$, the above estimate yields
\begin{equation}\label{intermediate-estimate}
    |Tf(z)| \leq e^{\frac{-1}{2h} (|\mathrm{Re} z|^2 - |\mathrm{Im}z|^2)} \left(\sup_{|t| < \epsilon a} \left | \int e^{-\frac{\mathrm{i}}{h} y \cdot (t + \mathrm{i} z)} f(y) \mathrm{d}y \right | + \sqrt{2} e^{\frac{1}{h} |\mathrm{Re} z'|} e^{- \frac{\epsilon^2 a^2}{4h}} \int\limits_\Omega |f(y)| \mathrm{d}y \right).
\end{equation}
\subsection{A local uniqueness result}

%\tre{SS: State as a lemma }
\begin{proposition}\label{lm: local uniqueness}
    Let $ \Omega\subset \Rn\, (n\ge 2)$ be a bounded domain with smooth boundary. Let  $ x_0\in \partial\Omega \setminus \Gamma$. Under the assumptions of Theorem \ref{th:main_result}, there exists $ \delta >0$ such that $ a^j=0$ in $ B(x_0,\delta) \cap \Omega$, for $j=0,1,2$. 
\end{proposition}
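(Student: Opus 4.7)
The plan is to test the integral identity \eqref{integral_identity} with pairs of CGO solutions from Lemma~\ref{lm: special solutions} to derive exponentially small bounds on suitable Fourier--Laplace transforms of $a^0, a^1, a^2$, and convert these via the intermediate estimate \eqref{intermediate-estimate} into smallness of the Segal--Bargmann transforms $Ta^j(z)$ for $z$ close to $x_0$. The $L^p$ convergence $Tf/(2\pi h)^{n/2}\to f$ then forces $a^j=0$ near $x_0$. Throughout, we use the set-up of Section~\ref{sec:preliminaries}: $x_0=0$, $\Omega\subset B(-e_1,1)\cap\{x_1\le 0\}$, and $K\subset\{x_1<-c\}$, so that $\mathrm{Im}\,\xi$ inside a narrow cone around $+e_1$ gives $H_K(\mathrm{Im}\,\xi)\le -c_0|\mathrm{Im}\,\xi|<0$.

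For $\xi_j\in\Cb^n$ with $\xi_j\cdot\xi_j=0$ and $\mathrm{Im}\,\xi_j$ in such a cone, and for polynomial amplitudes $q_j\in\{1, x_k, |x|^2\}$ (each satisfying $\Delta q_j=$\,const and $\xi_j^i\xi_j^k\partial_{ik}q_j=0$, the latter being automatic thanks to $\xi_j\cdot\xi_j=0$ in the $|x|^2$ case), Lemma~\ref{lm: special solutions} produces $u_j=e^{-\I x\cdot\xi_j/h}q_j(x)+r_j\in\mathcal{E}$ with $\|r_j\|_{H^2(\Omega)}\le C\,\mathrm{poly}(|\xi_j|/h)\,e^{-c_0|\mathrm{Im}\,\xi_j|/h}$. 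Substituting $u=u_1,\,v=u_2$ into \eqref{integral_identity} and isolating the $r_j$-free contribution, with $\zeta:=\xi_1+\xi_2$,
\[
\int_\Omega e^{-\I x\cdot\zeta/h}\Bigl[a^2\bigl(\Delta q_1-\tfrac{2\I}{h}\xi_1\!\cdot\!\nabla q_1\bigr)q_2+a^1\!\cdot\!\bigl(\nabla q_1-\tfrac{\I}{h}\xi_1 q_1\bigr)q_2+a^0 q_1 q_2\Bigr]\D x=E(h,\xi_1,\xi_2),
\]
with $|E|\le C\,\mathrm{poly}(1/h)\,e^{-c_0\min_j|\mathrm{Im}\,\xi_j|/h}$ by Cauchy--Schwarz, the $H^2$-bound on $r_j$, and the pointwise control $|e^{-\I x\cdot\xi_j/h}q_j|\le|q_j|$ on $\Omega\subset\{x_1\le 0\}$.

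By varying $(q_1,q_2)\in\{1,x_k,|x|^2\}^2$ and swapping $u\leftrightarrow v$ in \eqref{integral_identity}, we extract a linear system of $n+2$ relations in the Fourier--Laplace transforms $F^{(j)}(\zeta/h):=\int a^j(y)e^{-\I y\cdot\zeta/h}\D y$ together with their $\zeta$-derivatives (the polynomial weights $x^\alpha$ produce moments $=(\I h)^{|\alpha|}\partial_\zeta^\alpha F^{(j)}$). Solving the system propagates the exponential smallness of $E$ to each $F^{(j)}$. Parametrizing $\zeta=t+\I z$ with $t\in\Rn$ small and $z$ in a small complex neighborhood of $0$, and choosing a decomposition $\xi_1+\xi_2=\zeta$ with $|\mathrm{Im}\,\xi_j|$ at an $h$-dependent scale (e.g.~$|\mathrm{Im}\,\xi_j|\sim h^{1/2}$) that balances the polynomial and exponential factors while keeping $\mathrm{Im}\,\zeta$ in the reachable range, we insert the bound on $F^{(j)}$ into \eqref{intermediate-estimate} to conclude $|Ta^j(z)|=o((2\pi h)^{n/2})$ uniformly for $z$ close to $0$. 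The $L^p$ convergence of the Segal--Bargmann transform then gives $a^j=0$ in $B(x_0,\delta)\cap\Omega$ for some $\delta>0$.

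The main obstacle is twofold. First, the algebraic/differential decoupling of $a^0$, $a^1$, $a^2$ from a single scalar identity: since $a^1$ is vector-valued, we need $n+2$ linearly independent relations, and tracking how the $h$-orders and moment derivatives mix among the $F^{(j)}$ through the coupled system demands careful bookkeeping. Second, arranging the cone condition $H_K(\mathrm{Im}\,\xi_j)<0$ compatibly with the Segal--Bargmann parametrization $\mathrm{Im}\,\zeta=\mathrm{Re}\,z$ for $z$ near $x_0$ constrains $|\mathrm{Im}\,\xi_j|$ to tend to zero at a tuned rate in $h$, so the error estimate from Lemma~\ref{lm: special solutions} retains genuine exponential decay only if the polynomial factor $\mathrm{poly}(|\xi_j|/h)$ is dominated by $e^{-c_0|\mathrm{Im}\,\xi_j|/h}$ — the choice $|\mathrm{Im}\,\xi_j|\sim h^{1/2}$ is what enforces this balance.
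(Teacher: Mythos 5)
Your overall strategy---polynomial-amplitude CGO solutions from Lemma~\ref{lm: special solutions}, the Segal--Bargmann transform, and cone conditions on $\mathrm{Im}\,\xi$ to make $H_K(\mathrm{Im}\,\xi)$ negative---is the same as the paper's. The decoupling of $a^0,a^1,a^2$ is also in the same spirit (the paper does it sequentially and triangularly: first isolate $\int a^2 e^{-ix\cdot(\xi+\eta)/h}$ by the linear combination of $(u_\sharp,v_0)$, $\sum_j(u_j,v_j)$, $(u_0,v_\sharp)$; then use $a^2=0$ near $0$ in the $a^1$ step; then $a^0$).

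However, there is a genuine gap at the last stage, and the fix you propose does not work. You want to parametrize $\zeta=t+\I z$ with $z$ in a small complex neighborhood of $x_0=0$ and decompose $\zeta=\xi_1+\xi_2$ with both $\mathrm{Im}\,\xi_j$ in a cone around $+e_1$ so that $H_K(\mathrm{Im}\,\xi_j)\le -c_0|\mathrm{Im}\,\xi_j|<0$, while sending $|\mathrm{Im}\,\xi_j|\sim h^{1/2}\to 0$. But $\mathrm{Im}\,\xi_1+\mathrm{Im}\,\xi_2=\mathrm{Im}\,\zeta=\mathrm{Re}\,z\approx 0$, and if both $\mathrm{Im}\,\xi_{1,1},\mathrm{Im}\,\xi_{2,1}$ are positive (which the cone condition requires) and sum to $\approx 0$, then both are $\approx 0$, so $H_K(\mathrm{Im}\,\xi_j)\approx 0$ and there is no exponential gain at all. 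Choosing $|\mathrm{Im}\,\xi_j|\sim h^{1/2}$ does not rescue this: the obstruction is the sign constraint, not a balance between polynomial growth and exponential decay. In other words, the integral estimate from the CGO solutions is only available for $\zeta=\xi_1+\xi_2$ near $2\I a e_1$ with $a$ a fixed large parameter (this is the DKSU decomposition $|z-2\I a e_1|<2\epsilon a$, $\xi\cdot\xi=\eta\cdot\eta=0$, $\mathrm{Im}\,\xi_1,\mathrm{Im}\,\eta_1\ge a/4$), which corresponds to $z$ near the real point $2ae_1$---well to the right of $\overline{\Omega}\subset\{x_1\le 0\}$, not near $x_0$.

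The step you are missing is the quantitative holomorphic continuation: the weighted bound $e^{-\Phi(z_1)/2h}|Ta^2(z_1,x')|\le C h^{-4}$ holds for all $z_1\in\Cb$ and improves to $\le Ch^{-4}e^{-ca/4h}$ only for $|z_1-2a|\le\epsilon a/2$, and then the Phragm\'en--Lindel\"of-type Lemma~4.1 of \cite{linearized_partial_data} propagates this exponential gain along $\mathrm{Re}\,z_1$ back to a neighborhood $|x_1|\le\delta$ of the origin. Without this lemma (or a substitute), you never get $|Ta^j(x)|=o((2\pi h)^{n/2})$ for $x$ near $x_0$, and the $L^p$-convergence of $(2\pi h)^{-n/2}Ta^j\to a^j$ gives you nothing at $x_0$. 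This is the central tool of the argument and cannot be bypassed by rescaling $|\mathrm{Im}\,\xi_j|$.
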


\begin{proof}%[Proof of Proposition \ref{lm: local uniqueness}]
   % \tre{SS: Present proof here.}
We start with following integral identity 
\begin{equation}\label{integra_equation}
\int\limits_{\Omega} (a^2 \Delta u  + a_i^1 \partial_i u  + a^0 u) v \, \mathrm{d}x = 0,
\end{equation}
where $\O \subset B(-e_1, 1)$ and $\Gamma \subset \{x_1 < -2c\}$ for some $c>0$.
The idea of the proof is as follows: we use the special solutions constructed in Lemma \ref{lm: special solutions} to derive some identities, using which we will be able to establish the desired estimates. These estimates when invoked in \eqref{intermediate-estimate} will then imply that the coefficients vanish locally. We will do this for each of the three coefficients in the following steps.\smallskip

%We now derive some identities using the solutions constructed above.
\textbf{Step 1}: We prove that $a^2=0$ near origin. \smallskip  

To achieve this let us first choose $ u(x,\xi,h) = u_\sharp(x,\xi,h) = |x|^2 e^{\frac{- \mathrm{i} x \cdot \xi}{h}} +\err$ %\tilde{u}_\sharp$ 
and $v(x,\eta,h) = v_0(x,\eta,h) = e^{\frac{- \mathrm{i} x \cdot \eta}{h}} + \err_1$ and insert them into \eqref{integra_equation} to obtain
\begin{equation}\label{eq5*}
\begin{split}
    0 &= \int\limits_\Omega \Big[a^2 \{ 2n e^{\frac{-\mathrm{i} x \cdot \xi}{h}} - 4 \frac{\mathrm{i}}{h} x \cdot \xi e^{\frac{-\mathrm{i} x \cdot \xi}{h}} + \Delta \err  \}  + a_i^1 \{ 2 x_i e^{\frac{-\mathrm{i} x \cdot \xi}{h}} - \frac{\mathrm{i}}{h} \xi_i |x|^2 e^{\frac{-\mathrm{i} x \cdot \xi}{h}} + \partial_i \err\} \\ & \qquad + a^0 \{|x|^2 e^{\frac{- \mathrm{i} x \cdot \xi}{h}} + \err\}\Big] (e^{\frac{- \mathrm{i} x \cdot \eta}{h}} +\err_1) \, \mathrm{d}x.
\end{split}
\end{equation}
Let us again choose $u(x,\xi,h) = u_j(x,\xi,h) = x_j e^{\frac{- \mathrm{i} x \cdot \xi}{h}} + \tilde{\err}_j $ and $v(x,\eta,h) = x_j e^{\frac{- \mathrm{i} x \cdot \eta}{h}} + \tilde{\err}^1_j $ and sum over $j$ to conclude
\begin{equation}\label{eq6}
\begin{aligned}
    0 &= \int\limits_\Omega  \Big[ a^2 \{- \frac{ 2 \mathrm{i}}{h} \xi_j e^{\frac{- \mathrm{i} x \cdot \xi}{h}} + \Delta \tilde{\err}_j\} + a_i^1 \{\delta_{ij} e^{\frac{- \mathrm{i} x \cdot \xi}{h}} - \frac{\mathrm{i}}{h} \xi_i x_j e^{\frac{- \mathrm{i} x \cdot \xi}{h}} + \partial_i \tilde{\err}_j\}\\& \qquad + a^0 \{x_j e^{\frac{- \mathrm{i} x \cdot \xi}{h}} + \tilde{\err}_j\}\Big] (x_j e^{\frac{- \mathrm{i} x \cdot \eta}{h}} + \tilde{\err}^1_j)\D x.
\end{aligned}
\end{equation}
Subtracting \eqref{eq5*} from twice of \eqref{eq6} we obtain
 %We next perform $2 \times \eqref{eq6}- \eqref{eq5} $ and obtain
 \begin{equation}
     \begin{aligned}\label{eq7}
    0 &= \int\limits_{\Omega} [2na^2 e^{\frac{- \mathrm{i} x \cdot (\xi + \eta)}{h}} + 2n a^2\err_1 e^{\frac{- \mathrm{i} x \cdot \xi}{h}} - 4 \frac{\mathrm{i}}{h} x \cdot \xi a^2\err_1 e^{\frac{- \mathrm{i} x \cdot \xi}{h}} + 4 \frac{\mathrm{i}}{h} a^2 \xi_j \tilde{\err}^1_j e^{\frac{- \mathrm{i} x \cdot \xi}{h}} + a^2 \Delta \err (e^{\frac{- \mathrm{i} x \cdot \eta}{h}} +\err_1) \\
    & \qquad -2  a^2 \Delta \tilde{\err}_j (x_j e^{\frac{- \mathrm{i} x \cdot \eta}{h}} + \tilde{\err}^1_j)+ 2 a_j^1 x_j e^{\frac{- \mathrm{i} x \cdot \xi}{h}}\err_1 - 2 a_j^1 \tilde{\err}^1_j e^{\frac{- \mathrm{i} x \cdot \xi}{h}} + \frac{\mathrm{i}}{h} a_j^1 \xi_j |x|^2 e^{\frac{- \mathrm{i} x \cdot (\xi + \eta)}{h}} - \frac{\mathrm{i}}{h} a^1_i \xi_i |x|^2\err_1 e^{\frac{- \mathrm{i} x \cdot \xi}{h}}  \\
    &\qquad+ 2 \frac{\mathrm{i}}{h} a_i^1 \xi_i x_j \tilde{\err}^1_j e^{\frac{- \mathrm{i} x \cdot \xi}{h}} + a_i^1 \partial_i \err (e^{\frac{- \mathrm{i} x \cdot \eta}{h}} +\err_1) - 2 a_i^1 \partial_i \tilde{\err}_j (x_j e^{\frac{- \mathrm{i} x \cdot \eta}{h}} + \tilde{\err}^1_j) - a^0 |x|^2 e^{\frac{- \mathrm{i} x \cdot (\xi + \eta)}{h}}   \\
    & \qquad+  a^0\err_1 |x|^2 e^{\frac{- \mathrm{i} x \cdot \xi}{h}}+ a^0 \err (e^{\frac{- \mathrm{i} x \cdot \eta}{h}} +\err_1) - 2 a^0 x_j \tilde{\err}^1_j e^{\frac{- \mathrm{i} x \cdot \xi}{h}} - 2 a^0 \tilde{\err}_j (x_j e^{\frac{- \mathrm{i} x \cdot \eta}{h}} + \tilde{\err}^1_j)] \, \mathrm{d}x. 
    \end{aligned}
\end{equation}
Finally, taking $u(x,\xi,h) = u_0 (x,\xi,h) = e^{\frac{- \mathrm{i} x \cdot \xi}{h}} + \err_2$ and $ v(x, \eta, h) = v_\sharp (x, \eta, h) = |x|^2 e^{\frac{- \mathrm{i} x \cdot \eta}{h}} + \err_3$, we  conclude
\begin{align}\label{eq8}
    0 &= \int\limits_\Omega  [ a^2 \{\Delta \err_2\} + a_i^1 \{- \frac{\mathrm{i}}{h} \xi_i  e^{\frac{- \mathrm{i} x \cdot \xi}{h}} + \partial_i \err_2\} + a^0 \{ e^{\frac{- \mathrm{i} x \cdot \xi}{h}} + \err_2\}] (|x|^2 e^{\frac{- \mathrm{i} x \cdot \eta}{h}} + \err_3) \, \mathrm{d}x.
\end{align}
We add \eqref{eq7} 
 and \eqref{eq8} to derive%
\begin{align*}
    \int\limits_{\Omega} a^2 e^{\frac{- \mathrm{i} x \cdot (\xi + \eta)}{h}} \mathrm{d}x & = -\frac{1}{2n} \Bigg[ \int\limits_{\Omega}  2n a^2\err_1 e^{\frac{- \mathrm{i} x \cdot \xi}{h}} - 4 \frac{\mathrm{i}}{h} x \cdot \xi a^2\err_1 e^{\frac{- \mathrm{i} x \cdot \xi}{h}} + 4 \frac{\mathrm{i}}{h} a^2 \xi_j \tilde{\err}^1_j e^{\frac{- \mathrm{i} x \cdot \xi}{h}} + a^2 \Delta \err (e^{\frac{- \mathrm{i} x \cdot \eta}{h}} +\err_1) \\
    &\qquad+ a^2 \Delta \err_2 (|x|^2 e^{\frac{- \mathrm{i} x \cdot \eta}{h}} + \err_3) -2  a^2 \Delta \tilde{\err}_j (x_j e^{\frac{- \mathrm{i} x \cdot \eta}{h}} + \tilde{\err}^1_j)+ 2 a_j^1 x_j e^{\frac{- \mathrm{i} x \cdot \xi}{h}}\err_1 - 2 a_j^1 \tilde{\err}^1_j e^{\frac{- \mathrm{i} x \cdot \xi}{h}} \\
    &\qquad-\frac{\mathrm{i}}{h} a_j^1 \xi_j e^{\frac{- \mathrm{i} x \cdot (\xi)}{h}} \err_3  +a^1_i \partial_i \err_2 (|x|^2 e^{\frac{- \mathrm{i} x \cdot \eta}{h}} + \err_3) - \frac{\mathrm{i}}{h} a_i^1 \xi_i |x|^2\err_1 e^{\frac{- \mathrm{i} x \cdot \xi}{h}} + 2 \frac{\mathrm{i}}{h} a_i^1 \xi_i x_j \tilde{\err}^1_j e^{\frac{- \mathrm{i} x \cdot \xi}{h}} \\
    &\qquad+ a_i^1 \partial_i \err (e^{\frac{- \mathrm{i} x \cdot \eta}{h}} +\err_1) - 2 a_i^1 \partial_i \tilde{\err}_j (x_j e^{\frac{- \mathrm{i} x \cdot \eta}{h}} + \tilde{\err}^1_j) + a^0 \err_3 e^{\frac{-\mathrm{i} x \cdot \xi}{h}}+ a^0 \err_2 (|x|^2 e^{\frac{- \mathrm{i} x \cdot \eta}{h}} + \err_3) \\
    & \qquad+ a^0\err_1 |x|^2 e^{\frac{- \mathrm{i} x \cdot \xi}{h}} + a^0 \err (e^{\frac{- \mathrm{i} x \cdot \eta}{h}} +\err_1) - 2 a^0\, x_j \tilde{\err}^1_j e^{\frac{- \mathrm{i} x \cdot \xi}{h}} - 2 a^0\, \tilde{\err}_j (x_j e^{\frac{- \mathrm{i} x \cdot \eta}{h}} + \tilde{\err}^1_j) \, \mathrm{d}x \Bigg].
\end{align*}
The goal of the exercise was that now on the right hand side, each summand contains at least one term which has a good decay, as stated in Lemma~\ref{lm: special solutions}. Thus, we have the bound: 
\begin{align}\label{est-xi+eta}
    |\int\limits_{\Omega} a^2 e^{\frac{- \mathrm{i} x \cdot (\xi + \eta)}{h}} \mathrm{d}x| & \leq C (\|a^2\|_\infty + \|a^1\|_\infty + \|a\|_\infty) (1 + \frac{|\xi|^2}{h^2} + \frac{|\xi|^4}{h^4})^{1/2} (1 + \frac{|\eta|^2}{h^2} + \frac{|\eta|^4}{h^4})^{1/2}  \\
    & \hspace{2cm} \times  e^{\frac{1}{h} H_K (\mathrm{Im} \ \xi)} e^{\frac{1}{h} H_K (\mathrm{Im} \ \eta)}.
\end{align}

We use the decomposition result, stated in \cite{DKSU}, which says that any $z \in \mathbb{C}^n$ such that $|z-2\mathrm{i}ae_1| < 2\epsilon a$, for $\epsilon$ small enough, can be written as 
\begin{equation}
    z = \xi + \eta, \quad  \xi \cdot \xi = 0 = \eta \cdot \eta, \quad |\xi - a (\mathrm{i}e_1 + e_2)| < C \epsilon a, \quad |\eta + a(e_2 - \mathrm{i}e_1)| < C \epsilon a.
\end{equation}
Notice that for $\epsilon$ small enough, the vectors $\xi$ and $\eta$ can be chosen such that $\mathrm{Im} \xi_1 , \mathrm{Im} \eta_1 \geq \frac{a}{4}$. Keeping in mind that $\Omega \subset B(-e_1, 1)$ and $K \subset \{x_1 < -c\}$, this yields
\begin{equation}
    H_K(\mathrm{Im} \xi) \leq -c \mathrm{Im} \xi_1  + |\mathrm{Im} (\xi')|, \quad \mbox{and} \quad   H_K(\mathrm{Im} \eta) \leq -c \mathrm{Im} \eta_1  + |\mathrm{Im} (\eta')|.
\end{equation}
% and
% \begin{equation}
%     H_K(\mathrm{Im} \eta) \leq -c \mathrm{Im} \eta_1  + |\mathrm{Im} (\eta')|
% \end{equation}
%\tre{add argument about how this leads to the next estimate} 
 This decomposition and estimate invoked in \eqref{est-xi+eta} leads to 
\begin{equation}\label{estimate-integral}
    \abs{\int\limits_{\Omega} a^2 e^{\frac{- \mathrm{i} x \cdot z}{h}} \mathrm{d}x}  \leq C h^{-4} (\|a^2\|_\infty + \|a^1\|_\infty + \|a^0\|_\infty) e^{-\frac{ca}{2h}} e^{\frac{2C\epsilon a}{h}},
 \end{equation}
 for all $z \in \mathbb{C}^n$ such that $|z- 2 \mathrm{i} a e_1| < 2 \epsilon a$. 
%\section{Proof of local result}
When $|t| < \epsilon a$ and $|z-2ae_1| < \epsilon a$, we also have $|(t + \mathrm{i} z) - 2\mathrm{i}ae_1| < 2 \epsilon a$. We next insert \eqref{estimate-integral} in \eqref{intermediate-estimate} to estimate  $Ta^2$, the Segal-Bargmann transform of $a^2$ introduced in Section \ref{sec:Segal-Bargmann transform}. 
\begin{align*}
    |Ta^2(z)| \leq C h^{-4} (\|a^2\|_\infty + \|a^1\|_\infty + \|a^0\|_\infty) e^{\frac{-1}{2h} (|\mathrm{Re} z|^2 - |\mathrm{Im}z|^2)} (e^{-\frac{ca}{2h}} e^{\frac{2C \epsilon a}{h}} + e^{-\frac{\epsilon^2 a^2}{4h}} e^{\frac{\epsilon a}{h}}),
\end{align*}
whenever $|z - 2ae_1| < \epsilon a$. Now choosing $\epsilon < c/8C$ and $a > (c+ 4 \epsilon)/\epsilon^2$, we deduce
\[
|Ta^2(z)| \leq C h^{-4} (\|a^2\|_\infty + \|a^1\|_\infty + \|a^0\|_\infty) e^{\frac{1}{2h} (|\mathrm{Im}z|^2 -|\mathrm{Re} z|^2 - \frac{ca}{2})}.
\]
Therefore, we end up with the following bound, as in \cite{linearized_partial_data}:
\begin{align*}
    e^{-\frac{\Phi(z_1)}{2h}} |Ta^2(z_1, x')| \leq C & h^{-4} (\|a^2\|_\infty + \|a^1\|_\infty + \|a^0\|_\infty) \times
      \begin{cases}
        1, \quad \text{when} \quad z_1 \in \mathbb{C} \\
        e^{-\frac{ca}{4h}},\quad \text{when}\,\, |z_1 - 2a| \leq \frac{\epsilon a}{2}, |x'| < \frac{\epsilon a}{2}
    \end{cases}
\end{align*}
where $ x' \in \mathbb{R}^{n-1}$ and the weight $\Phi$ is defined as:
\[
\Phi(z_1) = \begin{cases}
    (\mathrm{Im} z_1)^2,  &\text{when} \, \mathrm{Re} z_1 \leq 0 \\
    (\mathrm{Im} z_1)^2 - (\mathrm{Re} z_1)^2, &\text{when} \, \mathrm{Re} z_1 \geq 0.
\end{cases}
\]
Now we are in a position to invoke \cite{linearized_partial_data}*{Lemma~4.1} for the function 
\[
F(s) = \frac{h^4 \, Ta^2(s, x')}{C (\|a^2\|_\infty + \|a^1\|_\infty + \|a^0\|_\infty)},
\]
to obtain that there exists $c' > 0$ such that
\[
|Ta^2(x)| \leq C h^{-4} (\|a^2\|_\infty + \|a^1\|_\infty + \|a^0\|_\infty) e^{- \frac{c'}{2h}},
\]
for all $x \in \Omega$ such that $|x_1| \leq \delta$ for $\delta$ small enough. Multiplying this estimate by $(2 \pi h)^{-n/2}$ and letting $h \to 0$, we obtain 
$
a^2(x) = 0 $ for $ x \in \Omega,\, |x_1| < \delta$. This implies $a^2=0$ near origin. \smallskip

\textbf{Step 2}: We show that $a^1=0$ near origin. \smallskip

Now we have the identity 
\[
\int\limits_{\Omega} (a^2 \Delta u  + a^1_i \partial_i u  + a^0 u) v \, \mathrm{d}x = 0, \quad\mbox{with $a^2(x) = 0$\quad for \,$ x \in \Omega,$\, such that\, $   |x_1| < \delta$}.
\]
For fixed $j$, let us now choose the solutions $u (x, \xi, h) = u_j (x, \xi, h) = x_j e^{\frac{- \mathrm{i} x \cdot \xi}{h}} + \tilde{\err}_j $ and $v (x, \eta, h) = v_0 (x, \eta, h) = e^{\frac{- \mathrm{i} x \cdot \eta}{h}} +\err_1$ in the above identity to obtain 
\begin{align}\label{eq10}
    \hspace{-5mm}\int\limits_{\Omega}  \Big[a^2\{\frac{- 2 \mathrm{i} \xi_j}{h} e^{\frac{- \mathrm{i} x \cdot \xi}{h}} + \Delta \tilde{\err}_j\} + a_i^1 \{(\delta_{ij}  - \frac{\mathrm{i} x_j \xi_i}{h} ) e^{\frac{- \mathrm{i} x \cdot \xi}{h}} + \partial_i \tilde{\err}_j\} + a^0 \{x_j e^{\frac{- \mathrm{i} x \cdot \xi}{h}} + \tilde{\err}_j\} \Big] (e^{\frac{- \mathrm{i} x \cdot \eta}{h}} +\err_1) \, \mathrm{d}x = 0.
\end{align}
Now, for the same $j$ as above, we again choose $u (x, \xi, h) = u_0(x, \xi, h) = e^{\frac{- \mathrm{i} x \cdot \xi}{h}} + \err_2$ and $v(x, \eta, h) = v_j (x, \eta, h) = x_j e^{\frac{- \mathrm{i} x \cdot \eta}{h}} + \tilde{\err}^1_j$ to obtain
\begin{align}\label{eq11}
    \int\limits_{\Omega} \Big[a^2 \Delta \err_2 + a_i^1 \{- \frac{\mathrm{i} \xi_i}{h} e^{\frac{- \mathrm{i} x \cdot \xi}{h}} + \partial_i \err_2 \} + a^0 \{ e^{\frac{- \mathrm{i} x \cdot \xi}{h}} + \err_2 \}\Big] (x_j e^{\frac{- \mathrm{i} x \cdot \eta}{h}} + \tilde{\err}^1_j) \, \mathrm{d}x = 0.
\end{align}
Subtracting equation \eqref{eq11} from \eqref{eq10}, we deduce
\begin{align*}
    &\int\limits_{\Omega} a^1_j e^{\frac{-\mathrm{i} x \cdot (\xi + \eta)}{h}} \\ &= \int\limits_{\Omega} a^2 (\frac{ 2 \mathrm{i} \xi_j}{h} - \Delta \tilde{\err}_j)(e^{\frac{- \mathrm{i} x \cdot \eta}{h}} +\err_1) + a^2 \Delta \err_2 (x_j e^{\frac{- \mathrm{i} x \cdot \eta}{h}} + \tilde{\err}^1_j) + \int\limits_{\Omega} -a^1_i (\partial_i \tilde{\err}_j)(e^{\frac{- \mathrm{i} x \cdot \eta}{h}} +\err_1) \\
    & \qquad+ \int\limits_{\Omega} a^1_i (\partial_i \err_2) (x_j e^{\frac{- \mathrm{i} x \cdot \eta}{h}} + \tilde{\err}^1_j) - a^1_j e^{\frac{- \mathrm{i} x \cdot \xi}{h}}\err_1 +\int\limits_{\Omega} a^1_i \frac{\mathrm{i} x_j \xi_i}{h}\err_1 - a^1_i \frac{\mathrm{i} \xi_i}{h} e^{\frac{- \mathrm{i} x \cdot \xi}{h}} \tilde{\err}^1_j \, \mathrm{d}x -a^0 \tilde{\err}_j (e^{\frac{- \mathrm{i} x \cdot \eta}{h}} +\err_1) \\ 
    & \qquad+ \int\limits_{\Omega}  a^0 \err_2 (x_j e^{\frac{- \mathrm{i} x \cdot \eta}{h}} + \tilde{\err}^1_j) - a^0 x_j e^{\frac{- \mathrm{i} x \cdot \xi}{h}}\err_1 + a^0 e^{\frac{- \mathrm{i} x \cdot \xi}{h}} \tilde{\err}^1_j \, .
\end{align*}
Notice that except for the first term in the integrand, every term can be bounded using the decay bound on the remainder terms. To bound the first term, we use the fact that $a^2(x) = 0$ for $x_1 >- \delta$, and obtain
\begin{align}
    \left |\int\limits_\Omega a^2 \frac{2\mathrm{i}\xi_j}{h} e^{\frac{-\mathrm{i}x \cdot \eta}{h}} \, \mathrm{d}x \right| &= \left |\:\int\limits_{\Omega \cap \{x_1 \leq -\delta\}} a^2 \frac{2\mathrm{i}\xi_j}{h} e^{\frac{-\mathrm{i}x \cdot \eta}{h}} \, \mathrm{d}x \right|
    \leq C \frac{\|a^2\|_\infty}{h} \times  e^{-\frac{\delta a}{4h}} \,\times e^{|\mathrm{Im}\eta'| / h}.
\end{align}
%Notice that the terms on the right hand side which do not have the remainder term, have $a^2$ multiplied with the exponential term. Using the fact that $a^2(x) = 0$ for $|x_1| < \delta$, 
%\tre{SS. Need more details. $e^{\frac{-\mathrm{i}x \cdot \eta}{h}} = e^{\frac{x \cdot \mathrm{Im}(\eta)}{h}}$. Since exp is increasing function, let us focus on the exponent. $x \cdot \mathrm{Im}(\eta) = x_1 \mathrm{Im}(\eta_1) + x' \cdot \mathrm{Im}\eta ' \leq  x_1 \mathrm{Im}(\eta_1) + |x'|  |\mathrm{Im}\eta '|$ }
Now estimating as in \textbf{Step 1}, we obtain 

\begin{equation}
    \abs{\int\limits_{\Omega} a^1_j e^{\frac{- \mathrm{i} x \cdot z}{h}} \mathrm{d}x}  \leq C h^{-4} (\|a^2\|_\infty + \|a^1\|_\infty + \|a^0\|_\infty) e^{-\frac{\delta a}{2h}} e^{\frac{2C\epsilon a}{h}},
 \end{equation}
 for all $z \in \mathbb{C}^n$ such that $|z- 2 \mathrm{i} a e_1| < 2 \epsilon a$. %\begin{align*}
%|\int\limits_{\Omega} a^1_j e^{\frac{- \mathrm{i} x \cdot (\xi + \eta)}{h}} \mathrm{d}x| & \leq C (\|a^2\|_\infty + \|a^1\|_\infty + \|a^0\|_\infty) (1 + \frac{|\xi|^2}{h^2} + \frac{|\xi|^4}{h^4})^{1/2} (1 + \frac{|\eta|^2}{h^2} + \frac{|\eta|^4}{h^4})^{1/2} \\
%    & \hspace{2cm} \times  e^{\frac{1}{h} H_K (\mathrm{Im} \ \xi)} e^{\frac{1}{h} H_K (\mathrm{Im} \ \eta)}.
%\end{align*}
Carrying out exactly as in \textbf{Step 1}, we find that $a^1_j(x) = 0$ for $x \in \Omega$ such that $|x_1| < \delta'$. \smallskip

\textbf{Step 3}: Finally we show that $a^0=0$ near origin.\smallskip

We have the identity 
\[
\int\limits_{\Omega} (a^2 \Delta u  + a_i^1 \partial_i u  + a^0 u) v \, \mathrm{d}x = 0,
\]
with $a^2(x) = 0 = a^1(x) \quad \text{for} \,\, x \in \Omega, |x_1| < \delta'$.
Now using the solutions $u(x, \xi, h) = u_0(x, \xi, h) = e^{\frac{- \mathrm{i} x \cdot \xi}{h}} + \err_2$ and $v(x,\eta, h) = v_0 (x, \eta, h) = e^{\frac{- \mathrm{i} x \cdot \eta}{h}} +\err_1$, and estimating as above, we arrive at 

\begin{equation}
    \abs{\int\limits_{\Omega} a^0 e^{\frac{- \mathrm{i} x \cdot z}{h}} \mathrm{d}x}  \leq C h^{-4} (\|a^2\|_\infty + \|a^1\|_\infty + \|a^0\|_\infty) e^{-\frac{\delta' a}{2h}} e^{\frac{2C\epsilon a}{h}},
 \end{equation}
 for all $z \in \mathbb{C}^n$ such that $|z- 2 \mathrm{i} a e_1| < 2 \epsilon a$.

%\begin{align*}
%    |\int\limits_{\Omega} a^0 e^{\frac{- \mathrm{i} x \cdot (\xi + \eta)}{h}} \mathrm{d}x| & \leq C (\|a^2\|_\infty + \|a^1\|_\infty + \|a^0\|_\infty) \left(1 + \frac{|\xi|^2}{h^2} + \frac{|\xi|^4}{h^4}\right)^{1/2} \left(1 + \frac{|\eta|^2}{h^2} + \frac{|\eta|^4}{h^4}\right)^{1/2} \\
%    & \qquad \times  e^{\frac{1}{h} H_K (\mathrm{Im} \ \xi)} e^{\frac{1}{h} H_K (\mathrm{Im} \ \eta)}.
%\end{align*}
Again, proceeding as in \textbf{Step 1 and Step 2}, we conclude $a^0(x) = 0$ for $x \in \Omega, |x_1| < \delta''$ for $\delta'' > 0$ small enough. Thus combining all the steps, we obtain  that $a^2, a^1$ and $a^0$ vanish in a small enough neighbourhood of the origin.%of the form ${x_1 > - 2 \tilde{c}}$.
\end{proof}

\section{Proof of main result}\label{sec: proof of main result}
%\section{Global result}
%In this section, we show that $a^2, a^1$ and $a^0$ vanish identically. First, we need the following lemma:
Our objective in this section is to demonstrate that $a^2, a^1$ and $a^0$ 
 vanish identically. To achieve this, we first present the following density result. 
\begin{lemma}\label{lm:density}
    Let $\Omega_1 \subset \Omega_2$ be two bounded open sets with smooth boundaries. Let $G_{2}$ be the Green kernel for the biharmonic operator associated to the open set $\Omega_2$ $i.e.,$
    \begin{equation*}
        \begin{split}
    -\Delta_y^2G_{2}(x, y) &= \delta(x - y) \quad \mbox{for all} \quad x,y\in \Omega_2\\
        \Big (G_{2}(x, \cdot), \frac{\partial G_{2}}{\partial \nu}(x, \cdot)\Big )&= 0 \quad\qquad\quad \mbox{on}\quad \PD\Omega_2.  
        \end{split}
    \end{equation*}
Then the set 
 $ \mathcal{A}$ is dense in the subspace of all biharmonic functions $u \in C^{\infty}(\overline{\Omega}_1)$ such that
$u|_{\partial \Omega_1 \cap \partial\Omega_2} = 0 = \frac{\partial u}{\partial \nu}|_{{\partial \Omega_1 \cap \partial\Omega_2}},$
    equipped with $L^2(\O_1)$ topology, where 
\begin{equation}\label{aprox set}
     \mathcal{A}:=   \left\{\int\limits_{\Omega_2} G_{2}(\cdot, y) a(y) \, \mathrm{d}y : a \in C^{\infty}(\overline{\Omega}_2), \operatorname{supp}a \subset \Omega_2 \setminus \overline{\Omega}_1
        \right\}.
    \end{equation}
\end{lemma}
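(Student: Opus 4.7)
\medskip

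\noindent\textbf{Proof proposal.} The plan is a standard Hahn--Banach / duality argument of Runge type. Let $V \subset L^2(\Omega_1)$ denote the subspace of all biharmonic $u \in C^\infty(\overline{\Omega}_1)$ with $u = \PD_\nu u = 0$ on $\PD\Omega_1 \cap \PD\Omega_2$. Since $\mathcal{A} \subset V$ (every potential of a source supported outside $\overline{\Omega}_1$ is biharmonic on $\Omega_1$ and inherits the vanishing Cauchy data on $\PD\Omega_1 \cap \PD\Omega_2$ from $G_2$), it suffices by Hahn--Banach to show: whenever $f \in L^2(\Omega_1)$ annihilates $\mathcal{A}$ in the $L^2$ inner product, it also annihilates every $u \in V$.

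To this end, extend $f$ by zero to $\Omega_2$ and introduce the potential
\[
v(y) := \int\limits_{\Omega_1} G_{2}(x, y)\, f(x)\, \D x, \qquad y \in \Omega_2.
\]
Because $\Delta^2$ with zero Dirichlet--Neumann data is self-adjoint, $G_{2}$ is symmetric, so $v$ solves $-\Delta^2 v = f\,\mathbf{1}_{\Omega_1}$ in $\Omega_2$ with $v = \PD_\nu v = 0$ on $\PD\Omega_2$. The orthogonality assumption $\int_{\Omega_1} f w \,\D x = 0$ for every $w \in \mathcal{A}$ becomes, by Fubini,
\[
\int\limits_{\Omega_2 \setminus \overline{\Omega}_1} a(y)\, v(y)\, \D y = 0 \quad \mbox{for every } a \in C_c^\infty(\Omega_2 \setminus \overline{\Omega}_1),
\]
so $v \equiv 0$ in $\Omega_2 \setminus \overline{\Omega}_1$.

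Now fix $u \in V$ and apply the biharmonic Green's identity on $\Omega_1$:
\[
\int\limits_{\Omega_1} \bigl(u\, \Delta^2 v - v\, \Delta^2 u\bigr)\, \D x = \int\limits_{\PD\Omega_1} \Bigl[ u\, \PD_\nu \Delta v \,-\, \Delta v \cdot \PD_\nu u \,-\, v\, \PD_\nu \Delta u \,+\, \Delta u \cdot \PD_\nu v \Bigr]\, \D S.
\]
Since $\Delta^2 u = 0$ and $\Delta^2 v = -f\,\mathbf{1}_{\Omega_1}$, the left-hand side is $-\int_{\Omega_1} f u\,\D x$. On $\PD\Omega_1 \cap \PD\Omega_2$, the four boundary integrands all carry a factor among $\{u, \PD_\nu u, v, \PD_\nu v\}$, each of which vanishes (the first two by $u\in V$, the last two because $(y,x) \mapsto G_{2}(x,y)$ has vanishing Cauchy data on $\PD\Omega_2$). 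On $\PD\Omega_1 \setminus \PD\Omega_2$, which lies strictly inside $\Omega_2$, interior elliptic regularity for $\Delta^2$ with $L^2$ right-hand side gives $v \in H^4_{\mathrm{loc}}(\Omega_2)$, so the traces of $v,\PD_\nu v,\Delta v,\PD_\nu\Delta v$ are well-defined and agree across the interface; since $v \equiv 0$ on the exterior side, all four traces vanish. Consequently $\int_{\Omega_1} f u\,\D x = 0$, as required.

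The one delicate point — and the place to be careful — is the trace matching across $\PD\Omega_1 \setminus \PD\Omega_2$: because $f\,\mathbf{1}_{\Omega_1}$ jumps across this interface, one must justify that $v$ still has enough regularity (namely $H^4$ in a two-sided neighborhood) for the traces $v, \PD_\nu v, \Delta v, \PD_\nu \Delta v$ appearing in Green's formula to be well-defined and continuous across. This follows from interior $H^4$-regularity for $\Delta^2$ with $L^2$ source, together with the trace theorem on the smooth hypersurface $\PD\Omega_1 \setminus \PD\Omega_2$. Apart from this regularity bookkeeping, the symmetry of $G_{2}$, the Fubini step, and the cancellation of boundary terms on $\PD\Omega_1 \cap \PD\Omega_2$ are all routine.
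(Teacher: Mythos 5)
Your proposal is correct and follows essentially the same duality argument as the paper: take $f$ orthogonal to $\mathcal{A}$, use Fubini to show the potential $v(y)=\int_{\Omega_1}G_2(x,y)f(x)\,\mathrm{d}x$ vanishes on $\Omega_2\setminus\overline{\Omega}_1$, then apply the biharmonic Green's identity on $\Omega_1$ with the boundary terms killed by the Cauchy data of $u$ and $G_2$ on $\partial\Omega_1\cap\partial\Omega_2$ and by the vanishing of all four traces of $v$ on $\partial\Omega_1\setminus\partial\Omega_2$. One small remark: the appeal to symmetry of $G_2$ is unnecessary, since $-\Delta_y^2 v=f\mathbf{1}_{\Omega_1}$ and the vanishing of $(v,\partial_\nu v)$ on $\partial\Omega_2$ follow directly from the defining properties of $G_2(x,\cdot)$ in the $y$-variable.
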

\begin{proof}
    Let $v \in L^2(\Omega_1)$ be a function which is orthogonal to subspace \eqref{aprox set}, then by Fubini's theorem we have
    \[\int\limits_{\Omega_2}a(y)\left(\:\int\limits_{\Omega_1}G_{2}(x, y)v(x) \, \mathrm{d}x\right)\, \mathrm{d}y = 0,\]
    for all $a \in C^{\infty}(\overline{\Omega}_2)$ supported in $\Omega_2 \setminus \overline{\Omega}_1$, therefore
    $\int_{\Omega_1}G_{2}(x, y)v(x) \, \mathrm{d}x = 0$
    for all $y \in \Omega_2 \setminus \overline{\Omega}_1$. We want to show that $v$ is orthogonal to any biharmonic function $u \in C^{\infty}(\overline{\Omega}_1)$  satisfying
     $u|_{\partial \Omega_1 \cap \partial\Omega_2} = 0 = \frac{\partial u}{\partial \nu}|_{{\partial \Omega_1 \cap \partial\Omega_2}}.$
    % Let $u \in C^{\infty}(\overline{\Omega}_1)$ be such a biharmonic function.
    Let us consider the function 
    $
     w(y) = \int_{\Omega_1}G_{2}(x, y)v(x) \, \mathrm{d}x.
     $
     Clearly, $w\in H^4(\Omega_2)$, and it satisfies $ \Delta^2 w=v$ in $\Omega_1$. % $ w=0$ in $ \Omega_2-\setminus \overline{\Omega_1}$. 
     Then, using the fact that $ \Delta^2 w=v$ in $ \Omega_1$, we obtain
     \begin{equation*}
         \begin{split}
             \int\limits_{\Omega_1} uv \,\mathrm{d}x &=  \int\limits_{\Omega_1}u\Delta^2w - \int\limits_{\Omega_1} w\Delta^2u\\
             &= - \int\limits_{\partial \Omega_1} \partial_{\nu}u \Delta w + \int\limits_{\partial \Omega_1} u \partial_{\nu}(\Delta w) + \int\limits_{\partial \Omega_1} \partial_\nu w \Delta u - \int\limits_{\partial \Omega_1}w\partial_{\nu}(\Delta u
             )\\
             &= \int\limits\limits_{\partial \Omega_1 \setminus \partial \Omega_2}-\partial_{\nu}u \Delta w + u \partial_{\nu}(\Delta w) + \partial_\nu w \Delta u - w\partial_{\nu}(\Delta u
             ).
         \end{split}
     \end{equation*}
    Since $w(y)=\int_{\Omega_1}G_{2}(x, y)v(x) \, \mathrm{d}x = 0$
    for all $y \in \Omega_2 \setminus \overline{\Omega}_1$ and $w\in H^4(\Omega_2)$, this implies  $\PD^k_{\nu} w=0$ on $ \partial \O_1 \backslash \partial \O_2$ for $k=0,\cdots,3$. % and its derivatives upto third order vanish on $\partial \O_1 \backslash \partial \O_2$.    
     %This along with the following facts  $u|_{\small{\partial \Omega_1 \cap \partial \Omega_2}} = 0 = \partial_\nu u|_{\partial \Omega_1 \cap \partial \Omega_2}$ and $w|_{\partial \Omega_2} = 0 = \partial_{\nu}w|_{\partial \Omega_2}$, and $w \in H^4(\Omega_2)$ and $w = 0$ in  $\Omega_2 \setminus \overline{\Omega}_1$. %Therefore, $w, \nabla w, \nabla^2 w, \nabla^3 w = 0$ on $\partial \Omega_1 \setminus \partial \Omega_2$.
     Hence we conclude
     $\int_{\Omega_1}uv \, \mathrm{d}x = 0,$
     for all $u$ such that $\Delta^2 u = 0,u|_{\partial \Omega_1 \cap \partial\Omega_2} = 0 = \frac{\partial u}{\partial \nu}|_{{\partial \Omega_1 \cap \partial\Omega_2}}$. This completes the proof.
\end{proof}

We now present the  proof of Theorem \ref{th:main_result}. %\textcolor{blue}{Divyansh's comment: Add a figure to explain the construction of sets $\Omega_1$ and $\Omega_2$.}

\begin{proof}[Proof of Theorem \ref{th:main_result}]%(local to global)]
    %We want to prove that $a^2, a^1$ and $a^0$ vanish identically inside $\Omega$. 
    Fix a point $x_1 \in \Omega$, and let $\Theta: [0,1] \to \overline{\Omega}$ be a smooth curve joining $x_0 \in \partial \Omega \backslash \Gamma$ to $x_1$. The curve $ \Theta$ satisfies  $\Theta(0)=x_0$, $\Theta'(0)$ is the interior normal to $\partial \Omega$ at $x_0$, and $\Theta(t) \in \Omega $ for all $ t \in (0,1]$. By local result (Proposition \ref{lm: local uniqueness}), there exists $ \epsilon_0 > 0$ such that $(a^0,a^1,a^2) = 0$ in $B_{\epsilon_0}(x_0)$. To this end, we consider the closed neighbourhood of the curve ending at $\Theta(t)$  defined as: 
    \begin{equation*}
        \Theta_{\epsilon}(t):= \{x \in \overline{\Omega}: \mathrm{dist}(x, \Theta([0,t])) \leq \epsilon\}.
    \end{equation*}
   % of the curve ending at $\Theta(t)$, $t \in [0,1]$ and the set
   Moreover, we define the set
    \begin{equation*}
            I = \{t \in [0,1] : a^0= a^1= a^2=0\, \,\,\text{a.e. on} \,\, \Theta_{\epsilon}(t) \cap \Omega\}.
    \end{equation*}
    Clearly, $I$ is a  closed subset of $[0,1]$. Also, $I$ is non-empty for $\epsilon$ small enough, by the local result. Let us now prove that $I$ is also open. WLOG, assume $\epsilon$ small enough such that $\Theta_\epsilon (1) \subset\joinrel\subset \overline{\Omega}$.
    
    Let $t_0 \in I$ ( which ensures $ [0, t_0] \subset I$). Let us smoothen $\Omega \backslash \Theta_{\epsilon}(t_0)$ into an open subset $\Omega_1$ of $\Omega$ with smooth boundary, such that $\Omega_1 \supset \Omega \backslash \Theta_\epsilon (t_0)$. For $ \epsilon<\epsilon_0$ we have that $\partial \Theta_{\epsilon}(t_0) \cap \partial \Omega \subset B_{\epsilon_0}(x_0) \cap \partial \Omega \subset \partial \Omega \backslash \Gamma$. This implies $\partial \Omega_1 \cap \partial \Omega \supset \Gamma$. Let us also smoothen out $\Omega \cup B_{\tilde{\epsilon}}(x_0) \,\, (\tilde{\epsilon} < \epsilon < \epsilon_0)$ into an open subset $\Omega_2$ with smooth boundary. Hence,
    \begin{equation*}
        \Omega \cup B_{\tilde{\epsilon}}(x_0) \subset \Omega_2, \quad \mbox{and} \quad
   % \end{equation*}
   % \begin{equation*}
    \Gamma\subset \partial \Omega_1 \cap \partial \Omega \subset    \partial \Omega_2 \cap \partial \Omega.
    \end{equation*}
     For each $x\in \Omega_2$, let $G_2(x,\cdot)$ be the Green kernel associated to the open set $\Omega_2$:
    \begin{equation*}
        \begin{cases}
            \Delta^2_y G_2 (x,y) = \delta (x-y) \\
           \left (G_2(x, \cdot)|_{\PD\Omega_2},\frac{\partial G_2}{\partial \nu(\cdot)} (x, \cdot)|_{\partial \Omega_2}\right) = 0.
        \end{cases} %\forall x \in \Omega_2
    \end{equation*}
To proceed further, we next define a new function $G$ as follows:
\begin{align}
    G(x,y):= a^2(y)\, \Delta_y G_2(x,y)  +   \langle \, a^1(y) , \nabla_y G_2(x,y) \, \rangle \ +  a^0(y) \, G_2(x,y) \quad \mbox{where $x\in \Omega_2 \backslash \overline{\Omega}_1, y\in \Omega_1$}.
\end{align}
Then the function 
    \begin{equation*}
   (z,x) \ni  \Omega_2 \backslash \overline{\Omega}_1 \times \Omega_2 \backslash \overline{\Omega}_1 \longmapsto\int\limits_{\Omega_1} G(x,y)\,G_2(z,y)\, \mathrm{d}y, 
    \end{equation*}
    is biharmonic viewed as a function of  $z$ and $x$ variables. Moreover, it satisfies
    \begin{align}\label{indentity_1}
    \int\limits_{\Omega_1} G(x,y)\, G_2(z,y)\mathrm{d}y 
        = \int\limits_{\Omega} G(x,y)\, G_2(z,y)\mathrm{d}y,
    \end{align}
since $(a^0, a^1, a^2)$ vanish on $\Theta_{\epsilon}(t_0) \cap \Omega$, and $\Omega \backslash \overline{\Omega}_1 \subset \Theta_{\epsilon}(t_0) \cap \Omega $. We see that the functions $y\mapsto G_2(z,y)$ and $y \mapsto G_2(x,y)$  are in $C^\infty(\overline{\Omega})$, biharmonic in $\Omega$, and vanish on $\Gamma \subset \partial \Omega_2$, when $z,x \in \Omega_2 \backslash \overline{\Omega}$. Thus $y\mapsto G_2(z,y)$ and $y \mapsto G_2(x,y)$ are in $\mathcal{E}$, see  definition \eqref{def_mathcal_E}. Next   inserting them into the integral identity \eqref{integral_identity} we obtain $ \int_{\Omega} G(x,y)\, G_2(z,y)\, \D y=0$ for all $ z,x \in \Omega_2\setminus\overline{\Omega}$. Note that,  biharmonic functions are real analytic, and any real analytic function that vanishes on some open set must be zero everywhere. The combination of this along with \eqref{indentity_1}, and $ \int_{\Omega} G(x,y)\, G_2(z,y)\, \D y=0$ for all $ z,x \in \Omega_2\setminus\overline{\Omega}$ implies $\int_{\Omega_1} G(x,y)\, G_2(z,y)\, \D y=0$ for all $ z,x \in \Omega_2\setminus\overline{\Omega_1} =0$.   
    %Next using the fact that any biharmonic function vanishes on a smaller domain must be zero everywhere, we conclude that
% \begin{equation*}
%     \int\limits_{\Omega_1} [ a^2(y)\Delta G_2(x,y) G_2(z,y) + \, \langle \, a^1(y) , \nabla_y G_2(x,y) \, \rangle \, G_2 (z,y) \, + \, a^0(y) \, G_2(x,y) \, G_2(z,y) \,] \, \mathrm{d}y = 0 \quad \text{for} \, x,z  \in \Omega_2 \backslash \overline{\Omega}_1.
%     \end{equation*}
  Next, multiplying $ \int_{\Omega_1} G(x,y)\, G_2(z,y)\, \D y$  by $b_1 = b_1(z) \in C_c^\infty(\Omega_2 \backslash \overline{\Omega}_1),$ and then taking integrating over $ \Omega_2$  we obtain
    \begin{equation*}
    \int\limits_{\Omega_2} \int\limits_{\Omega_1} G(x,y)\, G_2(z,y)\, \D y \,\, b_1(z) \, \mathrm{d}z = 0,  \,\, \mbox{for all}\; x \in \Omega_2 \backslash \overline{\Omega}_1. 
    \end{equation*}
    Using Fubini theorem, this further entails 
\begin{align}
      \int\limits_{\Omega_1} G(x,y) \left(\:\int\limits_{\Omega_2}  G_2(z,y) \, b_1(z)  \D z\right)\, \mathrm{d}y = 0,  \,\, \mbox{for all}\; x \in \Omega_2 \backslash \overline{\Omega}_1.
\end{align}
We next use the density result from Lemma \ref{lm:density} to approximation any biharmonic function $ u$ in $ \overline{\Omega_1}$ with $ u|_{\partial \Omega_1 \cap \partial\Omega_2} = 0 = \frac{\partial u}{\partial \nu}|_{{\partial \Omega_1 \cap \partial\Omega_2}} $, 
  utilizing the integral of the form  $\int_{\Omega_2}  G_2(z,y) \, b_1(z)  \D z$. This gives
    %We now utilize Fubini theorem to interchange the order of the integral, and invoke Lemma \ref{lm:density}, to deduce
    \begin{equation*}
  \int\limits_{\Omega_1} u(y)\,G(x,y)\, \D y=  \int\limits_{\Omega_1} u(y) [a^2 \Delta_y G_2(x,y) + \langle \, a^1(y) , \nabla_y G_2(x,y) \, \rangle + a^0(y) G_2(x,y)] \, \mathrm{d}y = 0,
    \end{equation*}
  for all $x \in \Omega_2 \backslash \overline{\Omega}_1$.  %and  for all
%  $ u \in C^\infty(\overline{\Omega}_1)$ biharmonic such that $ u = 0 = \frac{\partial u}{\partial \nu}$ on $\partial \Omega_1 \cap \partial \Omega_2$. %Thus, we have 
    %\begin{equation*}
        %\int\limits_{\Omega_1} u(y) [a^2 \Delta G_2(x,y) + \langle \, a^1(y) , \nabla_y G_2(x,y) \, \rangle + a^0(y) G_2(x,y)] \, \mathrm{d}y = 0 \quad \forall x \in \Omega_2 \backslash \overline{\Omega}_1.
    %\end{equation*}
    %This implies for all $ x \in \Omega_2 \backslash \overline{\Omega}_1$
    Integration by parts then gives
    \begin{align*}
    0=\int\limits_{\Omega_1} &\Delta_y(a^2(y) u(y)) G_2(x,y) - \langle \nabla u(y) , a^1(y) \rangle G_2(x,y) -  \mathrm{div}(a^1)(y) \, G_2(x,y) \, u(y) \, + \, a^0(y) u(y) G_2(x,y) \, \mathrm{d}y \\
        &+ \int\limits_{\partial \Omega_1} a^2(y)u(y) \frac{\partial G_2}{\partial \nu}(x,y) - G_2(x,y)\frac{\partial (a^2u(y))}{\partial \nu}\, \mathrm{d} \sigma (y) + \int\limits_{\partial \Omega_1} u(y) \langle a^1, \nu \rangle G_2(x,y) \, \mathrm{d} \sigma (y).
    \end{align*}
    Since $a^1 = 0 $ on $\partial \Omega_1 \cap \Omega$, $a^2 = 0$ on $\Omega_1^c \cap \Omega$ and $G_2(x, \cdot) = 0$ on $\partial \Omega_1 \cap \partial \Omega_2$, for all $x \in \Omega_2 \backslash \overline{\Omega}_1$, this implies
    \begin{equation*}
    \begin{split}
    0=&\int\limits_{\Omega_1} [a^2\Delta u 
        (y) + u(y)\Delta a^2 + 2\langle \nabla a^2, \nabla u(y)\rangle] G_2(x,y) - \langle \nabla u(y) , a^1(y) \rangle
        G_2(x,y)\D y\\& \quad-\int\limits_{\Omega_1} (\mathrm{div}(a^1) \, G_2(x,y) \, u(y) - a^0(y) u(y) G_2(x,y) )\, \mathrm{d}y. 
    \end{split}
    \end{equation*}
Multiplying this by a function $b_2 = b_2(x) \in C_c^{\infty}(\Omega_2 \backslash \overline{\Omega}_1)$ and integrating over $\O_2$,  
    \[ \int\limits_{\Omega_2}\int\limits_{\Omega_1}[a^2 \Delta u(y) + \langle \nabla u(y), 
    2\nabla a^2 - a^1(y) \rangle + \left(\Delta a^2 - \operatorname{div}(a^1) + a^0(y)\right)u(y)]G_2(x, y)\, \mathrm{d}y b_2(x)\mathrm{d}x = 0.\]
    %By Fubini
    %\[ \int_{\Omega_1}\int_{\Omega_2}[a^2 \Delta u(y) + \langle \nabla u(y), 
    %2\nabla a^2 - a^1(y) \rangle + \left(\Delta a^2 - \operatorname{div}(a^1) + a^0(y)\right)u(y)]G_2(x,y)b_2(x)\, \mathrm{d}x \mathrm{d}y = 0,\]
    Again, we use Fubini theorem to interchange the order of the integral and invoke Lemma \ref{lm:density} to obtain
    \[\int\limits_{\Omega_1}[a^2 \Delta u(y) + \langle \nabla u(y), 
    2\nabla a^2 - a^1(y) \rangle + \left(\Delta a^2 - \operatorname{div}(a^1) + a^0(y)\right)u(y)]v(y) \, \mathrm{d}y = 0.\]
    % which further entails
    % \[\int_{\Omega_1}a^2 \Delta u(y)v(y) + \langle \nabla u(y),  2\nabla a^2 - a^1(y) \rangle v(y) + \left(\Delta a^2 - \operatorname{div}(a^1) + a^0(y)\right)u(y)v(y) \, \mathrm{d}y = 0.\]
This gives $a^2=0, 2\nabla a^2 - a^1= 0,$ and $\Delta a^2 -\operatorname{div}a^1 + a^0= 0$ locally at each point of $\partial \Omega_1 \cap \Omega$ using the local uniqueness result from Proposition \ref{lm: local uniqueness} replacing $\Omega$ by $\Omega_1$. Now, for each $x \in \partial \Theta_\epsilon(t_0) \setminus B_{\epsilon_0}(x_0)$, $\Omega_1$ can be chosen such that $\partial \Omega_1 \cap \partial \Theta_\epsilon(t_0) = \{x\}$. Thus, $a^0, a^1$ and $a^2$ vanish locally at each point of $\partial \Theta_\epsilon(t_0)$. This proves that $I$ is open. Since $I$ is a non-empty connected set that is both open and closed. This implies $I=[0,1]$. Since $x_1$ was an arbitrary point, this implies $a^j=0$ everywhere in $\Omega$ for $j=0,1,2$. This completes the proof.
\end{proof}

\appendix

\section{Linearization and derivation of the integral identity}\label{appen_linearization}
In the section, we linearize the Dirichlet-to-Neumann map by computing its Fr\'echet derivative, when both Dirichlet and Neumann data are known only on a non-empty open set of the boundary. 
Let $\Sigma \subset \PD \O$ be non-empty and open and let $H^s_\Sigma (\PD \O)$ denote the space of functions $f \in H^s(\PD \O)$ such that $\mathrm{supp} f \subset \Sigma$. Recall the operator \eqref{operator} together with its Dirichlet boundary conditions:
\begin{equation}\label{operator1}
\begin{aligned}
\begin{cases}
    	\Lc_Q(x,D)&=
		(-\Delta)^2 + Q(x,D)\quad  \mbox{in} \quad \Omega
	%\end{equation}
% 	where 
% 	\begin{equation} \label{operator2}
	   \\
	    (u,\PD_{\nu} u)&= (f_1,f_2) \quad \qquad \hspace{9mm} \mbox{on} \quad \PD\Omega
\end{cases}
\end{aligned}
\end{equation}
where $ Q(x,D):= \sum_{l=0}^{3} a^{l}_{i_1\cdots i_{l}}(x) \, D^{i_1\cdots i_l}$	is a differential operator of order $3$ with $1\le i_1,\cdots,i_l\le n$ and $a^{l}$ is a smooth symmetric  tensor field of order $l$ in $\overline{\Omega}$ and $(f_0, f_1) \in H^{7/2}_\Sigma(\PD \O) \times H^{5/2}_\Sigma (\PD \O)$. In what follows, we identify the tensor fields $(a^l)_{l=0}^3$ to the associated differential operator $\sum_{l=0}^{3} a^{l}_{i_1\cdots i_{l}}(x) \, D^{i_1\cdots i_l}$ and let $\mathcal{S}$ denote the space of all such bounded smooth tensor fields, equipped with the $L^{\infty}$ norm. Then, the partial DN map is defined as:
%\textcolor{blue}{SS. what is $\mathcal{S}?$ The mapping property of $\Lambda$ does not seem to be correct.}
\begin{align}
   \Lambda : \mathcal{S} \to B \left (H^{7/2}_\Sigma(\PD \O) \times H^{5/2}_\Sigma (\PD \O), \big ( H^{3/2}(\PD \O) \times H^{1/2} (\PD \O)\big )\big|_{\Sigma} \right ),\quad \Lambda_{Q} (f_1,f_2):= (\PD^2_{\nu} u|_{\Sigma}, \PD^3_{\nu} u|_{\Sigma}).
\end{align}
%\begin{align}
%\tbl{  new: \Lambda :   H^{7/2}_\Sigma(\PD \O) \times H^{5/2}_\Sigma (\PD \O) \rightarrow\big ( H^{3/2}(\PD \O) \times H^{1/2} (\PD \O)\big )\big|_{\Sigma} \quad \mbox{given as} \quad\Lambda (f_1,f_2):= (\PD^2_{\nu} u|_{\Sigma}, \PD^3_{\nu} u|_{\Sigma}).}
%\end{align}
where $\big ( H^{3/2}(\PD \O) \times H^{1/2} (\PD \O)\big )\big|_{\Sigma}$ denotes the restriction of the corresponding functions to $\Sigma$. The following lemma gives an expression for the Fr\'echet  derivative of the map $Q\mapsto \Lambda_Q$.

\begin{lemma}
    Suppose $0$ is not an eigen value of \eqref{operator1} and let $P_Q: (H^{7/2}_\Sigma(\PD \O) \times H^{5/2}_\Sigma (\PD \O)) \to H^4(\O) $ denote the solution operator for the problem \eqref{operator1}. Let $G_Q: L^2(\O) \to \Dc(\Lc_Q)$ be the Green operator satisfying
    \begin{equation}
        \begin{aligned}
            \begin{cases}
                \Lc_Q (G_Q F) &= F \quad \mbox{in} \quad \O \\
                (G_Q F, \PD_\nu (G_Q F)) &= 0 \quad \mbox{on} \quad \PD \O.
            \end{cases}
        \end{aligned}
    \end{equation}
    The Fr\'echet derivative of $\Lambda$ is given as
    \begin{align*}
        (\D \Lambda)_Q = B_Q: \Sc \to B \left (H^{7/2}_\Sigma(\PD \O) \times H^{5/2}_\Sigma (\PD \O), \big ( H^{3/2}(\PD \O) \times H^{1/2} (\PD \O)\big )\big|_{\Sigma} \right )
    \end{align*}
    defined for $H \in \Sc$ as 
    \begin{align}
        (B_Q H)(f) &= \bigg ( \PD_\nu^2 G_Q (-HP_Q f), \PD_\nu^3 G_Q(-HP_Q f) \bigg ) \bigg |_{\Sigma}, \quad \mbox{for} \quad f=(f_0,f_1) \in (H^{7/2}_\Sigma(\PD \O) \times H^{5/2}_\Sigma (\PD \O)).
    \end{align}
\end{lemma}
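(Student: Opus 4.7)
The plan is to run the standard difference-quotient argument: for $H \in \mathcal{S}$ with $\|H\|_{L^\infty}$ small enough (so that $0$ remains a non-eigenvalue of $\mathcal{L}_{Q+H}$ and the solution operator $P_{Q+H}$ is well-defined), I would study the quantity $\Lambda_{Q+H}(f) - \Lambda_{Q}(f)$ and identify the linear-in-$H$ part. Concretely, set $w := P_{Q+H} f - P_Q f$; subtracting the two boundary value problems one sees that $w$ solves
\begin{equation*}
    \mathcal{L}_Q w = -H w - H P_Q f \quad \mbox{in } \Omega, \qquad (w, \partial_\nu w) = 0 \quad \mbox{on } \partial \Omega.
\end{equation*}
Applying the Green operator $G_Q$ yields the key representation
\begin{equation*}
    w = G_Q(-H w) + G_Q(-H P_Q f).
\end{equation*}

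Next, I would use the continuity of $G_Q : L^2(\Omega) \to H^4(\Omega)$ (guaranteed by $0$ not being an eigenvalue of $\mathcal{L}_Q$) together with the fact that $H$ is a differential operator of order at most three with smooth coefficients, so $\|H w\|_{L^2(\Omega)} \le C \|H\|_{L^\infty} \|w\|_{H^4(\Omega)}$. For $\|H\|_{L^\infty}$ sufficiently small the first summand on the right of the representation is absorbed into the left-hand side, producing the a priori bound
\begin{equation*}
    \|w\|_{H^4(\Omega)} \le C \|H\|_{L^\infty(\Omega)} \, \|f\|_{H^{7/2}(\partial \Omega) \times H^{5/2}(\partial \Omega)}.
\end{equation*}
The computation then gives
\begin{equation*}
    \Lambda_{Q+H}(f) - \Lambda_{Q}(f) - (B_Q H)(f) = \bigl(\partial_\nu^2 G_Q(-H w),\; \partial_\nu^3 G_Q(-H w)\bigr)\big|_\Sigma,
\end{equation*}
since the $G_Q(-H P_Q f)$ contribution in $w$ exactly reproduces $(B_Q H)(f)$ on $\Sigma$, while the $G_Q(-H w)$ piece is quadratically small.

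Combining the continuity of $G_Q$, the trace theorem that bounds $(\partial_\nu^2 u, \partial_\nu^3 u)|_{\partial \Omega}$ in $H^{3/2}(\partial\Omega) \times H^{1/2}(\partial \Omega)$ by a constant times $\|u\|_{H^4(\Omega)}$, and the just-derived estimate on $w$, I would conclude
\begin{equation*}
    \bigl\|\Lambda_{Q+H}(f) - \Lambda_Q(f) - (B_Q H)(f)\bigr\|_{(H^{3/2}(\partial\Omega) \times H^{1/2}(\partial\Omega))|_\Sigma} \le C \|H\|_{L^\infty}^2 \, \|f\|_{H^{7/2}_\Sigma \times H^{5/2}_\Sigma},
\end{equation*}
which is the very definition of $B_Q$ being the Fr\'echet derivative of $\Lambda$ at $Q$. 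Linearity of $H \mapsto B_Q H$ is immediate from the formula, and continuity with respect to the $L^\infty$ norm on $\mathcal{S}$ follows by the same chain of bounds applied once, so $B_Q$ does land in the claimed bounded-operator space.

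The main technical point I expect to need care with is verifying that $\Lambda_{Q+H}$ is actually well-defined for $\|H\|_{L^\infty}$ small, i.e.\ that $0$ remains a non-eigenvalue of $\mathcal{L}_{Q+H}$; this is a standard perturbation argument using the resolvent identity for $\mathcal{L}_Q$ together with the compact embedding $H^4(\Omega) \hookrightarrow H^3(\Omega)$, and is exactly the openness-of-invertibles principle transferred to this unbounded-operator setting. Beyond this, the computation parallels \cite{Instability_Koch_Ruland_Salo} in the second-order Schr\"odinger case and \cite{SS_linearized} for polyharmonic operators, with only bookkeeping modifications to accommodate the third-order perturbation $Q$ and the fact that the partial DN map now outputs the \emph{pair} of second and third normal traces on $\Sigma$.
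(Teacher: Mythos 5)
Your proposal is correct and follows essentially the same route as the paper: set $w = P_{Q+H}f - P_Q f$, derive the fixed-point identity $w = G_Q(-Hw) + G_Q(-HP_Qf)$, absorb the $G_Q(-Hw)$ term via the $L^2 \to H^4$ boundedness of $G_Q$ to get $\|w\|_{H^4} \lesssim \|H\|_{L^\infty}\|f\|$, and then conclude the quadratic remainder estimate from the trace theorem. The only difference is that you flag and sketch the well-posedness of $\Lambda_{Q+H}$ for small $H$ (a Neumann-series perturbation argument), which the paper simply assumes without comment.
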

\begin{proof}
    Let $\|H\|_{L^\infty(\Omega)}$ be small enough such that $\Lambda_{Q + H}$ is well defined, where $H$ lives on the same space as $Q$. Given $f = (f_0, f_1) \in H_{\Sigma}^{7/2}(\partial \Omega) \times H_{\Sigma}^{5/2}(\partial \Omega)$, we have
    \begin{align*}
    \Lambda_{Q+H}f-\Lambda_Q f=
    \bigg(\PD^2_{\nu}(P_{Q + H}f - P_Qf), \PD^3_{\nu}(P_{Q + H}f - P_Qf)\bigg) \bigg|_{\Sigma}.
\end{align*}
The function $ w\coloneqq P_{Q+H}f-P_{Q}f$ satisfies the following partial differential equation.
\begin{align*}
    ((-\Delta)^2 +Q(x,D))w &= -H\,w-H\,P_{Q}f \quad  \mbox{in}\quad \Omega\\  (w,\PD_{\nu} w)&=0 \quad \quad \quad \quad \quad \quad \quad\mbox{on} \quad \PD\Omega.
\end{align*}
\newcommand{\nrm}[1]{\lVert #1 \rVert}
Thus, we can write $w=G_{Q} (-H\,w)+ G_{Q}(-H P_{Q}f)$ and $ w\in \mathcal{D}(\Lc_{Q + H})$.
Utilizing the continuity of the Green operator $  G_{Q}(H\,w) $  we obtain
\begin{align*}
    \lVert G_{Q}(H\,w)\rVert_{H^{4}(\Omega)} 
    &\le c \lVert H\,w\rVert_{L^2(\Omega)}\le \frac{1}{2} \lVert w\rVert_{H^{4}(\Omega)}  \quad \mbox{if} \quad \lVert H\rVert_{L^{\infty}(\Omega)} \,\,\, \mbox{is small}. 
\end{align*}
We next estimate $ \lVert w\rv_{H^{4}(\Omega)} = \nrm {G_{Q} (-H\,w)+ G_{Q}(-H P_{Q}f)}_{H^{4}(\Omega)}$. The combination of this  with triangle inequality and last displayed relation implies $\lVert w\rv_{H^{4}(\Omega)} \le \lv H\rv_{L^{\infty}}\,\lv f\rv_{H^{\frac{7}{2}}(\partial\Omega) \times H^{\frac{5}{2}}(\partial\Omega)}.  $ Next we observe that,
\begin{align*}
  & \bigg(\Lambda_{Q+H}(f)-\Lambda_Q(f) - \left ( \PD_\nu^2 G_Q (-HP_Q f), \PD_\nu^3 G_Q(-HP_Q f)\right)|_{\Sigma} \bigg) \\&\,=  \bigg(\PD^2_{\nu} ( w- G_{Q}HP_{Q}f), \PD^3_{\nu} ( w- G_{Q}HP_{Q}f)\bigg) \bigg |_{\Sigma}= \bigg(\PD^2_{\nu} G_{Q}(-Hw), \PD^3_{\nu}G_{Q}(-Hw)\bigg) \bigg |_{\Sigma}.
\end{align*}
Combining trace theorem, continuity of $G_{Q}$ and $ \lVert w\rv_{H^{4}(\Omega)} \le \lv H\rv_{L^{\infty}(\Omega)}\,\lv f\rv_{H^{\frac{7}{2}}(\partial\Omega) \times H^{\frac{5}{2}}(\partial\Omega)}$  we obtain from above %H^{2m-2k-\frac{3}{2}}(\partial\Omega)  norm of 
\begin{align*}
& \lVert (\PD^2_{\nu} G_{Q}(-Hw), \PD^3_{\nu}G_{Q}(-Hw))\rVert_{H^{\frac{3}{2}}(\Sigma) \times H^{\frac{1}{2}}(\Sigma)} \le  \lVert (\PD^2_{\nu} G_{Q}(-Hw), \PD^3_{\nu}G_{Q}(-Hw))\rVert_{H^{\frac{3}{2}}(\partial\Omega) \times H^{\frac{1}{2}}(\partial\Omega)}\\& \le   \lVert G_{Q}(H\,w)\rVert_{H^{4}(\Omega)}   \le \lv H\rv_{L^{\infty}(\Omega)}\, \lVert w\rv_{H^{4}(\Omega)} \le  \lv H\rv^2_{L^{\infty}(\Omega)}\, \lv f\rv_{H^{\frac{7}{2}}(\partial\Omega) \times H^{\frac{5}{2}}(\partial\Omega)}.
\end{align*}
This proves that the Fr{\'e}chet derivative of $Q\mapsto \Lambda_{Q}$ at $Q$ is $B_Q$.
\end{proof}

We are interested in studying the injectivity of $\mathrm{d}\Lambda|_{Q=0}$. This reduces to

\begin{lemma}
    Let $\D \Lambda|_{Q=0} = 0$. Then for any $H= (a^{(l)})_{l=0}^3 \in \Sc$, the following integral identity holds
    \begin{align}
        \int\limits_{\O} \sum\limits_{l=0}^3 a^{(l)}_{i_1 \dots i_l} \PD_{i_1 \dots i_l} u v \, \D x &= 0,
    \end{align}
    for all biharmonic functions $u$ and $v$ in $\O$ whose Dirichlet data is supported in $\Sigma$.
\end{lemma}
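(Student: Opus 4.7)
The plan is to translate the vanishing condition $B_0 H = 0$ into the desired integral identity via a Green's identity for the bilaplacian. Given $f=(f_0,f_1) \in H^{7/2}_\Sigma(\partial\Omega) \times H^{5/2}_\Sigma(\partial\Omega)$ and $g=(g_0,g_1) \in H^{7/2}_\Sigma(\partial\Omega) \times H^{5/2}_\Sigma(\partial\Omega)$, set $u = P_0 f$ and $v = P_0 g$ so that $u$ and $v$ are biharmonic in $\Omega$ with Dirichlet data supported in $\Sigma$. The first step is to introduce the auxiliary function $w := G_0(-H P_0 f) \in \mathcal{D}(\mathcal{L}_0)$, which satisfies $\Delta^2 w = -H u$ in $\Omega$ together with $(w, \partial_\nu w)|_{\partial \Omega} = 0$. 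The hypothesis $\mathrm{d}\Lambda|_{Q=0}(H) = 0$ reads $(B_0 H)(f) = 0$, i.e., $\partial_\nu^2 w|_\Sigma = 0$ and $\partial_\nu^3 w|_\Sigma = 0$ for every such $f$.

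Next, I will multiply the equation $\Delta^2 w = -Hu$ by $v$, integrate over $\Omega$, and apply the standard Green's identity for the biharmonic operator
\[
\int_{\Omega} (\Delta^2 w) v \, \mathrm{d}x - \int_{\Omega} w (\Delta^2 v) \, \mathrm{d}x = \int_{\partial \Omega} \Big[ (\partial_\nu \Delta w) v - (\Delta w) \partial_\nu v + (\partial_\nu w) \Delta v - w \, \partial_\nu \Delta v \Big] \, \mathrm{d}\sigma.
\]
Since $\Delta^2 v = 0$ in $\Omega$ and $w = \partial_\nu w = 0$ on $\partial\Omega$, the second volume term and the last two boundary terms drop out, leaving
\[
-\int_{\Omega} (Hu) v \, \mathrm{d}x = \int_{\partial \Omega} \big[(\partial_\nu \Delta w)\, v - (\Delta w)\, \partial_\nu v \big] \, \mathrm{d}\sigma = \int_{\Sigma} \big[ (\partial_\nu \Delta w) g_0 - (\Delta w) g_1 \big] \, \mathrm{d}\sigma,
\]
where in the last equality I used that $v|_{\partial\Omega} = g_0$ and $\partial_\nu v|_{\partial\Omega} = g_1$ are both supported in $\Sigma$.

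The remaining task is to identify $\Delta w$ and $\partial_\nu \Delta w$ on $\Sigma$ with the normal derivatives appearing in the definition of $B_0$. Working in boundary normal coordinates near $\Sigma$ and decomposing $\Delta = \partial_\nu^2 + H \partial_\nu + \Delta_\tau$ (with $H$ the mean curvature term and $\Delta_\tau$ the tangential Laplacian), the identities $w|_{\partial \Omega} = 0$ and $\partial_\nu w|_{\partial \Omega} = 0$ force $\Delta_\tau w|_{\partial \Omega} = 0$ and allow the same reduction for the next derivative, yielding $\Delta w|_{\partial \Omega} = \partial_\nu^2 w|_{\partial \Omega}$ and $\partial_\nu \Delta w|_{\partial \Omega} = \partial_\nu^3 w|_{\partial \Omega}$. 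Combining this with $B_0 H = 0$ makes the boundary integral vanish, and we conclude
\[
\int_{\Omega} \sum_{l=0}^{3} a^{(l)}_{i_1 \cdots i_l}(x) \, \partial_{i_1 \cdots i_l} u \, \cdot v \, \mathrm{d}x = \int_{\Omega} (Hu) v \, \mathrm{d}x = 0
\]
for all $u = P_0 f$, $v = P_0 g$, which is the desired integral identity. The only subtle step is the Green's identity boundary bookkeeping together with the reduction of $\Delta w$ and $\partial_\nu \Delta w$ to $\partial_\nu^2 w$ and $\partial_\nu^3 w$ on the boundary; the rest is direct.
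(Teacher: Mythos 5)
Your proof is correct and follows essentially the same route as the paper: multiply $\Delta^2 w = -Hu$ (where $w = G_0(-HP_0 f)$) against $v = P_0 g$, invoke the biharmonic Green's identity, use $w = \partial_\nu w = 0$ on $\partial\Omega$ and the support of $g$ in $\Sigma$ to reduce the boundary integral to a term involving $\Delta w$ and $\partial_\nu \Delta w$ on $\Sigma$, and then convert $B_0 H = 0$ (stated in terms of $\partial_\nu^2 w$, $\partial_\nu^3 w$) into the vanishing of $\Delta w$ and $\partial_\nu \Delta w$ on $\Sigma$. The paper simply asserts the equivalence of $(\partial_\nu^2 w,\partial_\nu^3 w)|_\Sigma=0$ with $(\Delta w,\partial_\nu\Delta w)|_\Sigma = 0$ given the vanishing Cauchy data, whereas you spell out the boundary normal coordinate computation that justifies it; this is a harmless elaboration, not a different strategy.
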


\begin{proof}
    Let $g = (g_0, g_1) \in H^{7/2}_\Sigma (\PD \O) \times H^{5/2}_{\Sigma}(\PD \O)$. The function $P_0 g \in H^4(\O)$ satisfies
    \begin{align}
        \begin{cases}
            (-\Delta)^2 P_0 g = 0 &\quad \mbox{in} \quad \O \\
            (P_0 g, \PD_\nu P_0 g) = g &\quad \mbox{on} \quad \PD \O.
        \end{cases}
    \end{align}
Also, for $f = (f_0, f_1) \in H^{7/2}_\Sigma (\PD \O) \times H^{5/2}_{\Sigma}(\PD \O)$,  $G_0 (-HP_0 f)$  solves 
    \begin{align}
        \begin{cases}
            (-\Delta)^2 G_0 (-HP_0 f) = -HP_0 f &\quad \mbox{in} \quad \O \\
            (G_0 (-HP_0 f), \PD_\nu G_0 (-HP_0 f)) = 0 &\quad \mbox{on} \quad \PD \O.
        \end{cases}
    \end{align}
Multiplying $P_0g$ to the last equation and using Green's identities, we get
    \begin{align}
        - \int\limits_{\O} (HP_0 f) P_0g \, \D x & = \int\limits_{\O} (-\Delta)^2 G_0 (-HP_0f) P_0 g - G_0 (-HP_0 f) (-\Delta)^2 P_0g \\
        &= \int\limits_{\PD \O} \bigg[ -\PD_\nu P_0 g (\Delta G_0 (-HP_0f)) + P_0 g \PD_\nu (\Delta G_0 (-HP_0 f)) \\
        & \qquad + \Delta(P_0 g) \PD_\nu (G_0 (-H P_0f)) - G_0(-HP_0f) \PD_\nu(\Delta P_0g) \bigg] \D S.
    \end{align}
The last two terms vanish due to the properties of the Green's function on the boundary. By definition of $P_0$,  we have $(P_0 g, \PD_\nu P_0 g)|_{\PD \O} = (g_0, g_1)$ which vanishes outside $\Sigma$. This yields %Since $g \in H^{7/2}_\Sigma (\PD \O) \times H^{5/2}_{\Sigma}(\PD \O) $,
    \begin{align}
        - \int\limits_{\O} (HP_0 f) P_0g \, \D x
        &= \int\limits_{\Sigma} \bigg[ -\PD_\nu P_0 g (\Delta G_0 (-HP_0f)) + P_0 g \PD_\nu (\Delta G_0 (-HP_0 f))\bigg] \D S.
    \end{align}
    Since $\D \Lambda|_{0} = 0$, using the expression for $\D \Lambda|_{0}$ from the previous lemma,   we have $$\left ( \PD_\nu^2 (G_0 (-HP_0f)), \PD_\nu^3 (G_0 (-HP_0f)) \right ) \big |_{\Sigma} = 0.$$
    Now using the equivalence  of  $  \left ( u, \PD_\nu u, \PD_\nu^2 u, \PD_\nu^3 u \right )|_{\Sigma}= 0 \iff \left ( u, \PD_\nu u, \Delta u, \PD_\nu (\Delta u)\right )|_{\Sigma} = 0$, where $\Sigma \subset \PD\Omega$.
    %\tbl{maybe we can use a better word here: like equivalence of Dirichlet and Navier Cauchy data set? I don't know what the correct terminology is}
    % \begin{align}
    %     \left ( u, \PD_\nu u, \PD_\nu^2 u, \PD_\nu^3 u \right ) = 0 \iff \left ( u, \PD_\nu u, \Delta u, \PD_\nu (\Delta u)\right ) = 0
    % \end{align}
 This completes the proof.
\end{proof}
\section{Decay estimates}\label{appendix_decay}
\begin{lemma}\label{lem:decay}
Let $\err_2$ solves the following Dirichlet boundary value problem
\begin{align*}
     \Delta^2 \err_2 &= 0  \,\,\; \,\qquad\quad  \quad\text{in}\quad \Omega \\
   ( \err_2,\PD_{\nu} \err_2) &=  (w_0,\PD_{\nu} w_0) \quad \mbox{on} \quad \PD \Omega.
%    \frac{\partial \tilde{u}_0}{\partial \nu}|_{\partial \Omega} \frac{\partial w_0}{\partial \nu}|_{\partial \Omega}
\end{align*}
Where $ w_0= \chi(x) e^{\frac{-ix\cdot \xi}{h}}$ with $ \chi\in C_c^{\infty}(\Rn)$ and $\xi \in \mathbb{C}^n$ are from Lemma \ref{lm: special solutions}. Then there exists a constant $C$ independent of $\xi$ and $h$ such that 
\begin{align*}
\|\err_2\|_{H^2(\Omega)}  \leq C (1 + \frac{|\xi|^2}{h^2} + \frac{|\xi|^4}{h^4})^{1/2} e^{\frac{1}{h} H_K (\mathrm{Im} \ \xi)}, \quad \mbox{where}  \quad H_K(y)= \sup\limits_{x \in  K} x \cdot y, \quad y\in \mathbb{R}^n.
\end{align*}
    \end{lemma}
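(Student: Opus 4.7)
The plan is to reduce the decay estimate to the classical a priori bound for the biharmonic Dirichlet problem and then compute the resulting boundary Sobolev norms explicitly. First, I would invoke the standard elliptic regularity estimate for the biharmonic operator with Dirichlet-type data (see, for example, the monograph of Gazzola--Grunau--Sweers on polyharmonic boundary value problems) to obtain
\begin{align*}
\|\err_2\|_{H^2(\Omega)} \leq C \left( \|w_0\|_{H^{3/2}(\partial\Omega)} + \|\partial_\nu w_0\|_{H^{1/2}(\partial\Omega)} \right),
\end{align*}
with $C$ depending only on $\Omega$. This reduces the problem to estimating the two boundary norms on the right in terms of $|\xi|/h$ and the exponential weight.

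Second, I would exploit the explicit structure of the boundary data. Both $w_0 = \chi(x) e^{-\mathrm{i} x \cdot \xi/h}$ and $\partial_\nu w_0 = \bigl(\partial_\nu \chi - \mathrm{i}\chi\, \xi\cdot \nu/h\bigr) e^{-\mathrm{i} x\cdot \xi/h}$ are supported on $K = \mathrm{supp}\,\chi \cap \partial\Omega$, and on $K$ the modulus satisfies $|e^{-\mathrm{i} x\cdot\xi/h}| = e^{x \cdot \mathrm{Im}\,\xi/h} \leq e^{H_K(\mathrm{Im}\,\xi)/h}$ directly from the definition of $H_K$. To bypass fractional Sobolev norms on the boundary, I would use the continuous embeddings $H^2(\partial\Omega)\hookrightarrow H^{3/2}(\partial\Omega)$ and $H^1(\partial\Omega)\hookrightarrow H^{1/2}(\partial\Omega)$, so that it suffices to bound the integer-order tangential norms. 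Applying the Leibniz rule, each tangential derivative either hits $\chi$ (yielding a bounded factor) or the exponential (yielding a factor of order $|\xi|/h$); consequently one has the pointwise bound $|D^\alpha w_0|\leq C(1+|\xi|/h)^{|\alpha|} e^{H_K(\mathrm{Im}\,\xi)/h}$ on $K$, and an analogous bound with $|\alpha|$ replaced by $|\alpha|+1$ for $\partial_\nu w_0$. Summing the $L^2(\partial\Omega)$ norms over $|\alpha|\leq 2$ for $w_0$ and $|\alpha|\leq 1$ for $\partial_\nu w_0$, and using that $K$ has bounded surface measure, yields
\begin{align*}
\|w_0\|_{H^{3/2}(\partial\Omega)} + \|\partial_\nu w_0\|_{H^{1/2}(\partial\Omega)} \leq C \left(1 + \tfrac{|\xi|^2}{h^2} + \tfrac{|\xi|^4}{h^4}\right)^{1/2} e^{H_K(\mathrm{Im}\,\xi)/h}.
\end{align*}

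Combining this bound with the a priori estimate from the first paragraph delivers the claim. The only mildly delicate point is the treatment of the fractional Sobolev norms $H^{3/2}(\partial\Omega)$ and $H^{1/2}(\partial\Omega)$; the embeddings into integer-order spaces bypass it cleanly, but alternatively one could invoke Sobolev interpolation $\|u\|_{H^s(\partial\Omega)} \lesssim \|u\|_{L^2(\partial\Omega)}^{1-\theta}\|u\|_{H^k(\partial\Omega)}^{\theta}$ and arrive at the identical polynomial dependence on $|\xi|/h$. Note also that the constant $C$ is independent of $\xi$ and $h$ because the $L^{\infty}$ norms of the derivatives of $\chi$ and the surface measure of $K$ are geometric quantities independent of these parameters.
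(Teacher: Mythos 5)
Your proof is correct, and the first step (invoking elliptic regularity for the biharmonic Dirichlet problem, as in Gazzola--Grunau--Sweers, to reduce everything to the boundary $H^{3/2}$ and $H^{1/2}$ norms of the data) is exactly what the paper does. Where you diverge is the treatment of the fractional Sobolev norms: the paper chooses a finite atlas and partition of unity on $\partial\Omega$, writes out the Slobodeckij/Gagliardo seminorm for $H^{3/2}$ as a double integral $\int\int|\nabla w_{0,j}(x)-\nabla w_{0,j}(y)|^2/|x-y|^n$, and controls it via the mean value theorem applied to the Leibniz expansion of $\nabla w_{0,j}$. You instead use the continuous embeddings $H^2(\partial\Omega)\hookrightarrow H^{3/2}(\partial\Omega)$ and $H^1(\partial\Omega)\hookrightarrow H^{1/2}(\partial\Omega)$ (valid because $\partial\Omega$ is a compact manifold without boundary), which reduces the computation to integer-order tangential norms that are handled directly by the Leibniz rule; you also correctly note the alternative of Sobolev interpolation. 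Both routes yield the same $(1+|\xi|^2/h^2+|\xi|^4/h^4)^{1/2}$ growth, since $(1+|\xi|/h)^2\lesssim(1+|\xi|^2/h^2+|\xi|^4/h^4)^{1/2}$. Your embedding argument is shorter and avoids the somewhat fiddly mean-value estimate of the Gagliardo double integral, at the modest cost of computing one extra tangential derivative; the paper's approach is more self-contained in that it works with the fractional norm itself. Either is acceptable here.
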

    \begin{proof}
From the well-posedness of the above Dirichlet problem \cite{Polyharmonic_Book}*{Theorem 2.20}, we obtain
\[\|\err_2\|_{H^2(\Omega)}  \leq C (\|w_0\|_{H^{3/2}(\partial \Omega)} + \|\frac{\partial w_0}{\partial \nu}\|_{H^{1/2}(\partial \Omega)}),\]
where $w_0(x) = \chi(x) e^{\frac{-ix\cdot \xi}{h}}$. 
Since $\partial \Omega$ is  a compact Riemannian manifold of dimension $n - 1$, we can choose a finite number of coordinate neighborhood system $\{(U_i, \phi_i)\}_{i= 1}^{m_0}$, where
\[\phi_i: U_i  \to V_i \subset \subset \mathbb{R}^{n - 1}\]
is a diffeomorphism from $U_i$ onto an open subset $V_i$ contained in $\mathbb{R}^{n - 1}$. Let $\{\psi_i\}_{i = 1}^{m_0}$ be a partition of unity subordinate to $\{U_i\}_{i = 1}^{m_0}$. The $H^s$-norm of $g \in C^{\infty}(\partial \Omega)$, for $s \in \mathbb{R}$, by
\[ {||g||}_s^2 = \sum_{i = 1}^{m_0}{||(\psi_i g) \circ \phi_i^{-1}||_s^2}.\]
Let $w_{0, j}(x) = (\psi_j w_0) \circ \phi_j^{-1} = \psi_j \circ \phi_j^{-1} \cdot w_0 \circ \phi_j^{-1}$, $1 \leq j \leq m_0 $.  The $H^{3/2}(\PD\Omega)$ norm of $ w$ is defined as: $ ||w||^2_{H^{3/2}(\PD \Omega)}:= \sum_{j=1}^{m_0}||w_{0,j}||_{3/2}^2$, %:= ||w_{0,j}||^2 + ||\nabla w_{0,j}||^2 + \int_{V_i}\int_{V_i} \frac{|\nabla w_{0,j}(x) - \nabla w_{0,j}(y)|^2}{|x-y|^n} \, \mathrm{d}x \,\mathrm{d}y.
where \begin{align*}
    ||w_{0,j}||_{3/2}^2 := ||w_{0,j}||^2 + ||\nabla w_{0,j}||^2 + \int\limits_{V_i}\int\limits_{V_i} \frac{|\nabla w_{0,j}(x) - \nabla w_{0,j}(y)|^2}{|x-y|^n} \, \mathrm{d}x \,\mathrm{d}y.
\end{align*}
It is enough to estimate $ ||w_{0,j}||_{3/2}^2 $. We will  estimate $||w_{0,j}||_{3/2}^2$  component-wise. To this end, we consider the  $ L^2$ norm of $ w_{0,j}$ and observe that 
\begin{align}\label{eq_18}
    ||w_{0,j}||^2 \leq C \int\limits_{V_j}|w_0 \circ \phi_j^{-1}|^2 \, \mathrm{d}x \leq C\sup_{\partial \Omega \cap \operatorname{supp} \chi}|w_{0,j}|^2 \leq C e^{\frac{2}{h}H_K(\operatorname{Im\xi)}}.
\end{align}
Since $\nabla w_{0,j} = \nabla (\psi_j \circ \phi_j^{-1}) \cdot w_0 \circ \phi_j^{-1} + \psi_j \circ \phi_j^{-1} \cdot \nabla(w_0 \circ \phi_j^{-1}) = \nabla (\psi_j \circ \phi_j^{-1}) \cdot w_0 \circ \phi_j^{-1} + \psi_j \circ \phi_j^{-1} \cdot \nabla(w_0) \circ \phi_j^{-1} \cdot J(\phi_j^{-1})$, this implies
\begin{align}\label{eq_19}
    ||\nabla w_{0,j}||^2 \leq C \left( 1 + \frac{|\xi|^2}{h^2} \right) e^{\frac{2}{h}H_K(\operatorname{Im\xi)}}.
\end{align}
Next, we consider
\begin{equation*}
    \begin{split}
        &\int\limits_{V_i}\int\limits_{V_i} \frac{|\nabla w_{0,j}(x) - \nabla w_{0,j}(y)|^2}{|x-y|^n} \\&  \leq
 2  \int\limits_{V_i} \int\limits_{V_i}  \frac{| \nabla (\psi_j \circ \phi_j^{-1})(x) w_0 \circ \phi_j^{-1}(x) -  \nabla (\psi_j \circ \phi_j^{-1})(x) w_0 \circ \phi_j^{-1}(y)|^2}{|x -y|^{n}} \\&\qquad
+ \int\limits_{V_i}\int\limits_{V_i} \frac{|\psi_j \circ \phi_j^{-1}(x) \cdot \nabla(w_0 \circ \phi_j^{-1})(x) - \psi_j \circ \phi_j^{-1}(y) \cdot \nabla(w_0 \circ \phi_j^{-1})(y)|^2}{|x-y|^n}.
    \end{split}
\end{equation*}
Now using mean value theorem on each integrand of the above right hand side inequality and then using the property of $w_0$, we obtain
\begin{equation*}
    \int\limits_{V_i}\int\limits_{V_i} \frac{|\nabla w_{0,j}(x) - \nabla w_{0,j}(y)|^2}{|x-y|^n} \, \mathrm{d}x \mathrm{d}y \leq C \left(1 
+\frac{|\xi|^2}{h^2} + \frac{|\xi|^4}{h^4}\right) e^{\frac{2}{h}H_K(\operatorname{Im}{\xi})}.
\end{equation*}
The combination of this along with \eqref{eq_18} and \eqref{eq_19} gives
\begin{equation*}
||w_{0,j}||_{3/2}^2 \leq C \left(1 
+\frac{|\xi|^2}{h^2} + \frac{|\xi|^4}{h^4}\right) e^{\frac{2}{h}H_K(\operatorname{Im}{\xi})}, \quad \mbox{for each $1 \leq j \leq m_0$}.
\end{equation*}
 This further entails
\begin{equation*}
    ||w_0||_{H^{3/2}{(\partial \Omega)}} \leq C\left(1 
+\frac{|\xi|^2}{h^2} + \frac{|\xi|^4}{h^4}\right)^{1/2} e^{\frac{1}{h}H_K(\operatorname{Im}{\xi})}.
\end{equation*}
In a very similar fashion, one can also obtain 
\[\|{\frac{\partial w_0}{\partial \nu}}\|_{H^{1/2}(\partial \Omega)}
 \leq C\left(1 
+\frac{|\xi|^2}{h^2} + \frac{|\xi|^4}{h^4}\right)^{1/2} e^{\frac{1}{h}H_K(\operatorname{Im}{\xi})}.\]
We next combine preceding estimates and conclude
\[
\|\err_2\|_{H^2(\Omega)}  \leq C (\|w_0\|_{H^{3/2}(\partial \Omega)} + \|\frac{\partial w_0}{\partial \nu}\|_{H^{1/2}(\partial \Omega)}) \\
 \leq C (1 + \frac{|\xi|^2}{h^2} + \frac{|\xi|^4}{h^4})^{1/2} e^{\frac{1}{h} H_K (\mathrm{Im} \ \xi)},
\]
for all $\xi \in \mathbb{C}^n$ such that  $\xi \cdot \xi = 0$, and the constant $C$ is independent of $\xi$.
\end{proof}

\section*{Acknowledgements}
S. K. S was  partly supported by the Academy of Finland (Centre of Excellence
in Inverse Modelling and Imaging, grant 284715) and by the European Research Council
under Horizon 2020 (ERC CoG 770924). S. K. S would like to express his gratitude to M. Salo for their fruitful conversation on this subject. The authors thank Venky Krishnan and Sivaguru Ravisankar for several fruitful discussions which led to the improvement of the manuscript.

% By similar argument,  we get 
% \[
% \|\tilde{u}_j\|_{H^2(\Omega)}  \leq C (\|w_j\|_{H^{3/2}(\partial \Omega)} + \|\frac{\partial w_j}{\partial \nu}\|_{H^{1/2}(\partial \Omega)}) \\
%  \leq C (1 + \frac{|\xi|^2}{h^2} + \frac{|\xi|^4}{h^4})^{1/2} e^{\frac{1}{h} H_K (\mathrm{Im} \ \xi)}
% \]
% and 
% \[
% \|\tilde{u}_\sharp\|_{H^2(\Omega)}  \leq C (\|w_\sharp \|_{H^{3/2}(\partial \Omega)} + \|\frac{\partial w_\sharp}{\partial \nu}\|_{H^{1/2}(\partial \Omega)}) \\
%  \leq C (1 + \frac{|\xi|^2}{h^2} + \frac{|\xi|^4}{h^4})^{1/2} e^{\frac{1}{h} H_K (\mathrm{Im} \ \xi)},
% \]
% for all $\xi \in \mathbb{C}^n$ such that $\xi \cdot \xi = 0$, and the constant $C$ is independent of $\xi$.

\bibliographystyle{siam}
\bibliography{references}

@article {Kian_quasilinear_partial,
    AUTHOR = {Kian, Yavar and Krupchyk, Katya and Uhlmann, Gunther},
     TITLE = {Partial data inverse problems for quasilinear conductivity
              equations},
   JOURNAL = {Math. Ann.},
  FJOURNAL = {Mathematische Annalen},
    VOLUME = {385},
      YEAR = {2023},
    NUMBER = {3-4},
     PAGES = {1611--1638},
      ISSN = {0025-5831,1432-1807},
   MRCLASS = {35J62 (42B37)},
  MRNUMBER = {4566701},
       DOI = {10.1007/s00208-022-02367-y},
       URL = {https://doi.org/10.1007/s00208-022-02367-y},
}

@article {BG_biharmonic_second_order,
    AUTHOR = {Bhattacharyya, Sombuddha and Ghosh, Tuhin},
     TITLE = {An inverse problem on determining second order symmetric
              tensor for perturbed biharmonic operator},
   JOURNAL = {Math. Ann.},
  FJOURNAL = {Mathematische Annalen},
    VOLUME = {384},
      YEAR = {2022},
    NUMBER = {1-2},
     PAGES = {457--489},
      ISSN = {0025-5831},
   MRCLASS = {35R30 (31B20 31B30 35J40)},
  MRNUMBER = {4476229},
MRREVIEWER = {Akhtar A. Khan},
       DOI = {10.1007/s00208-021-02276-6},
       URL = {https://doi.org/10.1007/s00208-021-02276-6},
}

@incollection {Calderon1980,
    AUTHOR = {Calder\'{o}n, Alberto-P.},
     TITLE = {On an inverse boundary value problem},
 BOOKTITLE = {Seminar on {N}umerical {A}nalysis and its {A}pplications to
              {C}ontinuum {P}hysics ({R}io de {J}aneiro, 1980)},
     PAGES = {65--73},
 PUBLISHER = {Soc. Brasil. Mat., Rio de Janeiro},
      YEAR = {1980},
   MRCLASS = {35R30 (35K60)},
  MRNUMBER = {590275},
MRREVIEWER = {J. R. Cannon},
}

@article {LLLS_JMPA,
    AUTHOR = {Lassas, Matti and Liimatainen, Tony and Lin, Yi-Hsuan and
              Salo, Mikko},
     TITLE = {Inverse problems for elliptic equations with power type
              nonlinearities},
   JOURNAL = {J. Math. Pures Appl. (9)},
  FJOURNAL = {Journal de Math\'{e}matiques Pures et Appliqu\'{e}es. Neuvi\`eme S\'{e}rie},
    VOLUME = {145},
      YEAR = {2021},
     PAGES = {44--82},
      ISSN = {0021-7824},
   MRCLASS = {35R30 (35J25 35J61)},
  MRNUMBER = {4188325},
       DOI = {10.1016/j.matpur.2020.11.006},
       URL = {https://doi.org/10.1016/j.matpur.2020.11.006},
}

@article {Fractional_power_LLST,
    AUTHOR = {Liimatainen, Tony and Lin, Yi-Hsuan and Salo, Mikko and Tyni,
              Teemu},
     TITLE = {Inverse problems for elliptic equations with fractional power
              type nonlinearities},
   JOURNAL = {J. Differential Equations},
  FJOURNAL = {Journal of Differential Equations},
    VOLUME = {306},
      YEAR = {2022},
     PAGES = {189--219},
      ISSN = {0022-0396},
   MRCLASS = {35R30},
  MRNUMBER = {4332042},
       DOI = {10.1016/j.jde.2021.10.015},
       URL = {https://doi.org/10.1016/j.jde.2021.10.015}
       }

@article {Krupchyk_isotropic_quasilinear,
    AUTHOR = {C\^{a}rstea, C\u{a}t\u{a}lin I. and Feizmohammadi, Ali and Kian, Yavar and
              Krupchyk, Katya and Uhlmann, Gunther},
     TITLE = {The {C}alder\'{o}n inverse problem for isotropic quasilinear
              conductivities},
   JOURNAL = {Adv. Math.},
  FJOURNAL = {Advances in Mathematics},
    VOLUME = {391},
      YEAR = {2021},
     PAGES = {Paper No. 107956, 31},
      ISSN = {0001-8708},
   MRCLASS = {35R30},
  MRNUMBER = {4300916},
MRREVIEWER = {Zui Cha Deng},
       DOI = {10.1016/j.aim.2021.107956},
       URL = {https://doi.org/10.1016/j.aim.2021.107956},
}

@article {Krupchyk_remark,
    AUTHOR = {Krupchyk, Katya and Uhlmann, Gunther},
     TITLE = {A remark on partial data inverse problems for semilinear
              elliptic equations},
   JOURNAL = {Proc. Amer. Math. Soc.},
  FJOURNAL = {Proceedings of the American Mathematical Society},
    VOLUME = {148},
      YEAR = {2020},
    NUMBER = {2},
     PAGES = {681--685},
      ISSN = {0002-9939},
   MRCLASS = {35R30 (35J61)},
  MRNUMBER = {4052205},
MRREVIEWER = {Akhtar A. Khan},
       DOI = {10.1090/proc/14844},
       URL = {https://doi.org/10.1090/proc/14844},
}

@book {Polyharmonic_Book,
    AUTHOR = {Gazzola, Filippo and Grunau, Hans-Christoph and Sweers, Guido},
     TITLE = {Polyharmonic boundary value problems},
    SERIES = {Lecture Notes in Mathematics},
    VOLUME = {1991},
      NOTE = {Positivity preserving and nonlinear higher order elliptic
              equations in bounded domains},
 PUBLISHER = {Springer-Verlag, Berlin},
      YEAR = {2010},
     PAGES = {xviii+423},
      ISBN = {978-3-642-12244-6},
   MRCLASS = {35-02 (31B30 35A08 35J40 35J61 46E35 58E12)},
  MRNUMBER = {2667016 (2011h:35001)},
MRREVIEWER = {Rodney Josu{\'e} Biezuner},
       DOI = {10.1007/978-3-642-12245-3},
       URL = {http://dx.doi.org/10.1007/978-3-642-12245-3},
}

@misc{polyharmonic_mrt_application,
      title={Unique determination of anisotropic perturbations of a polyharmonic operator from partial boundary data}, 
      author={Sombuddha Bhattacharyya and Venkateswaran P. Krishnan and Suman Kumar Sahoo},
      note={https://arxiv.org/abs/2111.07610},
      year={2021},
      eprint={2111.07610},
      archivePrefix={arXiv},
      primaryClass={math.AP}
}

@article {linearized_partial_data,
    AUTHOR = {Dos Santos Ferreira, David and Kenig, Carlos E. and Sj\"{o}strand,
              Johannes and Uhlmann, Gunther},
     TITLE = {On the linearized local {C}alder\'{o}n problem},
   JOURNAL = {Math. Res. Lett.},
  FJOURNAL = {Mathematical Research Letters},
    VOLUME = {16},
      YEAR = {2009},
    NUMBER = {6},
     PAGES = {955--970},
      ISSN = {1073-2780},
   MRCLASS = {35R30 (35J25 65N21)},
  MRNUMBER = {2576684},
MRREVIEWER = {Bastian Harrach},
       DOI = {10.4310/MRL.2009.v16.n6.a4},
       URL = {https://doi.org/10.4310/MRL.2009.v16.n6.a4},
}

@article {Hintz_cpde,
    AUTHOR = {Hintz, Peter and Uhlmann, Gunther and Zhai, Jian},
     TITLE = {The {D}irichlet-to-{N}eumann map for a semilinear wave
              equation on {L}orentzian manifolds},
   JOURNAL = {Comm. Partial Differential Equations},
  FJOURNAL = {Communications in Partial Differential Equations},
    VOLUME = {47},
      YEAR = {2022},
    NUMBER = {12},
     PAGES = {2363--2400},
      ISSN = {0360-5302},
   MRCLASS = {35L76},
  MRNUMBER = {4526896},
       DOI = {10.1080/03605302.2022.2122837},
       URL = {https://doi.org/10.1080/03605302.2022.2122837},
}

@article {KLU_invention,
    AUTHOR = {Kurylev, Yaroslav and Lassas, Matti and Uhlmann, Gunther},
     TITLE = {Inverse problems for {L}orentzian manifolds and non-linear
              hyperbolic equations},
   JOURNAL = {Invent. Math.},
  FJOURNAL = {Inventiones Mathematicae},
    VOLUME = {212},
      YEAR = {2018},
    NUMBER = {3},
     PAGES = {781--857},
      ISSN = {0020-9910},
   MRCLASS = {35R30 (35L71 53C65 58J45)},
  MRNUMBER = {3802298},
MRREVIEWER = {Enno Pais},
       DOI = {10.1007/s00222-017-0780-y},
       URL = {https://doi.org/10.1007/s00222-017-0780-y},
}

@article {KLUO_duke,
    AUTHOR = {Kurylev, Yaroslav and Lassas, Matti and Oksanen, Lauri and
              Uhlmann, Gunther},
     TITLE = {Inverse problem for {E}instein-scalar field equations},
   JOURNAL = {Duke Math. J.},
  FJOURNAL = {Duke Mathematical Journal},
    VOLUME = {171},
      YEAR = {2022},
    NUMBER = {16},
     PAGES = {3215--3282},
      ISSN = {0012-7094},
   MRCLASS = {35R30},
  MRNUMBER = {4505359},
       DOI = {10.1215/00127094-2022-0064},
       URL = {https://doi.org/10.1215/00127094-2022-0064},
}

@article {BUK,
    AUTHOR = {Bukhgeim, Alexander L. and Uhlmann, Gunther},
     TITLE = {Recovering a potential from partial {C}auchy data},
   JOURNAL = {Comm. Partial Differential Equations},
  FJOURNAL = {Communications in Partial Differential Equations},
    VOLUME = {27},
      YEAR = {2002},
    NUMBER = {3-4},
     PAGES = {653--668},
      ISSN = {0360-5302},
   MRCLASS = {35R30 (35J10)},
  MRNUMBER = {1900557},
MRREVIEWER = {R. G. Airapetyan},
       DOI = {10.1081/PDE-120002868},
       URL = {https://doi.org/10.1081/PDE-120002868},
}

@article {Isakov_partial,
    AUTHOR = {Isakov, V.},
     TITLE = { On uniqueness in the inverse conductivity problem with local
data},
     JOURNAL = {Inverse Probl. Imaging},
     VOLUME = {1},
      YEAR = {2007},
     PAGES = {95-105},
    }

@article {Kenig_annals_2007,
    AUTHOR = {Kenig, Carlos E. and Sj\"{o}strand, Johannes and Uhlmann, Gunther},
     TITLE = {The {C}alder\'{o}n problem with partial data},
   JOURNAL = {Ann. of Math. (2)},
  FJOURNAL = {Annals of Mathematics. Second Series},
    VOLUME = {165},
      YEAR = {2007},
    NUMBER = {2},
     PAGES = {567--591},
      ISSN = {0003-486X},
   MRCLASS = {35R30 (35J10 35J25 35Q60)},
  MRNUMBER = {2299741},
MRREVIEWER = {R. G. Airapetyan},
       DOI = {10.4007/annals.2007.165.567},
       URL = {https://doi.org/10.4007/annals.2007.165.567},
}

@article {SS_linearized,
    AUTHOR = {Sahoo, Suman Kumar and Salo, Mikko},
     TITLE = {The linearized {C}alder\'{o}n problem for polyharmonic operators},
   JOURNAL = {J. Differential Equations},
  FJOURNAL = {Journal of Differential Equations},
    VOLUME = {360},
      YEAR = {2023},
     PAGES = {407--451},
      ISSN = {0022-0396},
   MRCLASS = {35R30 (31B20 31B30 35J40)},
  MRNUMBER = {4562046},
       DOI = {10.1016/j.jde.2023.03.017},
       URL = {https://doi.org/10.1016/j.jde.2023.03.017},
}

@article {KRU1,
    AUTHOR = {Krupchyk, Katsiaryna and Lassas, Matti and Uhlmann, Gunther},
     TITLE = {Inverse boundary value problems for the perturbed polyharmonic
              operator},
   JOURNAL = {Trans. Amer. Math. Soc.},
  FJOURNAL = {Transactions of the American Mathematical Society},
    VOLUME = {366},
      YEAR = {2014},
    NUMBER = {1},
     PAGES = {95--112},
      ISSN = {0002-9947},
   MRCLASS = {35R30 (31B20 31B30 35J40)},
  MRNUMBER = {3118392},
MRREVIEWER = {Sergey G. Pyatkov},
       DOI = {10.1090/S0002-9947-2013-05713-3},
       URL = {https://doi.org/10.1090/S0002-9947-2013-05713-3},
}

@incollection {Uhlmann_survey,
    AUTHOR = {Uhlmann, Gunther},
     TITLE = {30 years of {C}alder\'{o}n's problem},
 BOOKTITLE = {S\'{e}minaire {L}aurent {S}chwartz---\'{E}quations aux d\'{e}riv\'{e}es
              partielles et applications. {A}nn\'{e}e 2012--2013},
    SERIES = {S\'{e}min. \'{E}qu. D\'{e}riv. Partielles},
     PAGES = {Exp. No. XIII, 25},
 PUBLISHER = {\'{E}cole Polytech., Palaiseau},
      YEAR = {2014},
   MRCLASS = {35R30 (35-02 35J25)},
  MRNUMBER = {3381003},
}

@article {KRU2,
    AUTHOR = {Krupchyk, Katsiaryna and Lassas, Matti and Uhlmann, Gunther},
     TITLE = {Determining a first order perturbation of the biharmonic
              operator by partial boundary measurements},
   JOURNAL = {J. Funct. Anal.},
  FJOURNAL = {Journal of Functional Analysis},
    VOLUME = {262},
      YEAR = {2012},
    NUMBER = {4},
     PAGES = {1781--1801},
      ISSN = {0022-1236},
   MRCLASS = {35R30 (35B20 35B45 35J40)},
  MRNUMBER = {2873860},
MRREVIEWER = {Hideo Soga},
       DOI = {10.1016/j.jfa.2011.11.021},
       URL = {https://doi.org/10.1016/j.jfa.2011.11.021},
}

@article {Uhl_eip_survey,
    AUTHOR = {Uhlmann, Gunther},
     TITLE = {Electrical impedance tomography and {C}alder\'{o}n's problem},
   JOURNAL = {Inverse Problems},
  FJOURNAL = {Inverse Problems. An International Journal on the Theory and
              Practice of Inverse Problems, Inverse Methods and Computerized
              Inversion of Data},
    VOLUME = {25},
      YEAR = {2009},
    NUMBER = {12},
     PAGES = {123011, 39},
      ISSN = {0266-5611},
   MRCLASS = {78A48 (35-02 35J25 35R30)},
  MRNUMBER = {3460047},
MRREVIEWER = {Sergey G. Pyatkov},
       DOI = {10.1088/0266-5611/25/12/123011},
       URL = {https://doi.org/10.1088/0266-5611/25/12/123011},
}

@incollection {Kenig_Salo_Survey,
    AUTHOR = {Kenig, Carlos and Salo, Mikko},
     TITLE = {Recent progress in the {C}alder\'on problem with partial data},
 BOOKTITLE = {Inverse problems and applications},
    SERIES = {Contemp. Math.},
    VOLUME = {615},
     PAGES = {193--222},
 PUBLISHER = {Amer. Math. Soc., Providence, RI},
      YEAR = {2014},
   MRCLASS = {35R30 (35J25)},
  MRNUMBER = {3221605},
       DOI = {10.1090/conm/615/12245},
       URL = {http://dx.doi.org/10.1090/conm/615/12245},
}

@article {Ghosh-Krishnan,
    AUTHOR = {Ghosh, Tuhin and Krishnan, Venkateswaran P.},
     TITLE = {Determination of lower order perturbations of the polyharmonic
              operator from partial boundary data},
   JOURNAL = {Appl. Anal.},
  FJOURNAL = {Applicable Analysis. An International Journal},
    VOLUME = {95},
      YEAR = {2016},
    NUMBER = {11},
     PAGES = {2444--2463},
      ISSN = {0003-6811},
   MRCLASS = {35J40 (35G15 35R30 45Q05)},
  MRNUMBER = {3546596},
       DOI = {10.1080/00036811.2015.1092522},
       URL = {https://doi.org/10.1080/00036811.2015.1092522},
}

@article {SYL,
    AUTHOR = {Sylvester, J. and Uhlmann, G.},
     TITLE = {A global uniqueness theorem for an inverse boundary value
problem }, 
     JOURNAL = {Ann. of Math. (2) },
     VOLUME = {125},
      YEAR = {1987},
      NUMBER = {1},
     PAGES = {152-169},
}

@article {Imanuvilov_JAMS_partial,
    AUTHOR = {Imanuvilov, Oleg Yu. and Uhlmann, Gunther and Yamamoto,
              Masahiro},
     TITLE = {The {C}alder\'{o}n problem with partial data in two dimensions},
   JOURNAL = {J. Amer. Math. Soc.},
  FJOURNAL = {Journal of the American Mathematical Society},
    VOLUME = {23},
      YEAR = {2010},
    NUMBER = {3},
     PAGES = {655--691},
      ISSN = {0894-0347},
   MRCLASS = {35R30 (35J10 35Q60)},
  MRNUMBER = {2629983},
MRREVIEWER = {Patricia Gaitan},
       DOI = {10.1090/S0894-0347-10-00656-9},
       URL = {https://doi.org/10.1090/S0894-0347-10-00656-9},
}

@article {Uhlman_Greanleaf_twoplane,
    AUTHOR = {Greenleaf, Allan and Uhlmann, Gunther},
     TITLE = {Local uniqueness for the {D}irichlet-to-{N}eumann map via the
              two-plane transform},
   JOURNAL = {Duke Math. J.},
  FJOURNAL = {Duke Mathematical Journal},
    VOLUME = {108},
      YEAR = {2001},
    NUMBER = {3},
     PAGES = {599--617},
      ISSN = {0012-7094},
   MRCLASS = {35R30 (35J10 44A12)},
  MRNUMBER = {1838663},
MRREVIEWER = {Gottfried Anger},
       DOI = {10.1215/S0012-7094-01-10837-5},
       URL = {https://doi.org/10.1215/S0012-7094-01-10837-5},
}

@article {BG_19,
	AUTHOR = {Bhattacharyya, Sombuddha and Ghosh, Tuhin},
	TITLE = {Inverse boundary value problem of determining up to a second order tensor appear in the lower order perturbation of a polyharmonic operator},
	JOURNAL = {J. Fourier Anal. Appl.},
	FJOURNAL = {The Journal of Fourier Analysis and Applications},
	VOLUME = {25},
	YEAR = {2019},
	NUMBER = {3},
	PAGES = {661--683},
	ISSN = {1069-5869},
	MRCLASS = {35R30 (31B20 31B30 35J40)},
	MRNUMBER = {3953481},
	MRREVIEWER = {Valeri S. Serov},
	DOI = {10.1007/s00041-018-9625-3},
	URL = {https://doi.org/10.1007/s00041-018-9625-3},
}

@article {K_S,
    AUTHOR = {Kenig, Carlos and Salo, Mikko},
     TITLE = {The {C}alder\'{o}n problem with partial data on manifolds and
              applications},
   JOURNAL = {Anal. PDE},
  FJOURNAL = {Analysis \& PDE},
    VOLUME = {6},
      YEAR = {2013},
    NUMBER = {8},
     PAGES = {2003--2048},
      ISSN = {2157-5045},
   MRCLASS = {35R30 (35J10 58J32)},
  MRNUMBER = {3198591},
MRREVIEWER = {Matteo Santacesaria},
       DOI = {10.2140/apde.2013.6.2003},
       URL = {https://doi.org/10.2140/apde.2013.6.2003},
}

@article {DKSU,
    AUTHOR = {Dos Santos Ferreira, David and Kenig, Carlos E. and Salo,
              Mikko and Uhlmann, Gunther},
     TITLE = {Limiting {C}arleman weights and anisotropic inverse problems},
   JOURNAL = {Invent. Math.},
  FJOURNAL = {Inventiones Mathematicae},
    VOLUME = {178},
      YEAR = {2009},
    NUMBER = {1},
     PAGES = {119--171},
      ISSN = {0020-9910},
   MRCLASS = {58J32 (35R30)},
  MRNUMBER = {2534094},
MRREVIEWER = {Sergey G. Pyatkov},
       DOI = {10.1007/s00222-009-0196-4},
       URL = {https://doi.org/10.1007/s00222-009-0196-4},
}

@article {kelvin-biharmonic,
    AUTHOR = {Xu, X.},
     TITLE = {Uniqueness theorem for the entire positive solutions of
              biharmonic equations in {${\bf R}^n$}},
   JOURNAL = {Proc. Roy. Soc. Edinburgh Sect. A},
  FJOURNAL = {Proceedings of the Royal Society of Edinburgh. Section A.
              Mathematics},
    VOLUME = {130},
      YEAR = {2000},
    NUMBER = {3},
     PAGES = {651--670},
      ISSN = {0308-2105},
   MRCLASS = {35J60 (35J30)},
  MRNUMBER = {1769247},
MRREVIEWER = {Ya Zhe Chen},
       DOI = {10.1017/S0308210500000354},
       URL = {https://doi.org/10.1017/S0308210500000354},
}

\end{document}